\newtheorem{Theorem}{Theorem}
\newtheorem{Proposition}[Theorem]{Proposition}
\newtheorem{Lemma}[Theorem]{Lemma}
\newtheorem{Corollary}[Theorem]{Corollary}
\theoremstyle{definition}
\newtheorem{Definition}[Theorem]{Definition}
 \newtheorem{Example}[Theorem]{Example}
\numberwithin{equation}{section}
\newcommand{\Ss}{\Bbb{S}}
\newcommand{\N}{\Bbb{N}}
\newcommand{\C}{\bf C}
\DeclareMathOperator{\NGc}{\bf NG}
\DeclareMathOperator{\NGcc}{{\bf NGC}}
\newcommand{\holim}{{\underleftarrow{{\rm holim}}}}
\newcommand{\cholim}{{\underrightarrow{{\rm lim}}}}
\newcommand{\inlim}{{\underrightarrow{{\rm lim}}}}
\newcommand{\pnlim}{{\underleftarrow{{\rm lim}}}}
\DeclareMathOperator{\CF}{\check{C}}
\DeclareMathOperator{\FF}{\check{F}}
\DeclareMathOperator{\Top}{Top}
\DeclareMathOperator{\map}{map}
\begin{document}

\title[Steenrod-\v{C}ech homology-cohomology theories]{Steenrod-\v{C}ech homology-cohomology theories associated with bivariant functors}%

\author{Kohei Yoshida}%
\email{sc09121@s.okadai.jp}
\date{\today}%
\begin{abstract}
Let $\NGc_0$ denote the category of all pointed 
numerically generated spaces and
 continuous maps preserving base-points. 
In \cite{numerical}, we described a passage from bivariant functors to
generalized homology and cohomology theories. 
In this paper, we construct a bivariant functor 
such that the associated cohomology is the $\CF$ech
cohomology and the homology is the Steenrod homology
(at least for compact metric spaces). 
\end{abstract}
\maketitle
%
\section{introduction}
We call a topological space $X$ numerically generated
if it has the final topology with respect to its singular simplexes.
CW-complexes are typical examples of such numerically generated spaces.
Let  $\NGc_0$ be the
category of pointed numerically
generated spaces and 
pointed continuous maps.
In \cite{numerical} we showed that $\NGc_0$
is a symmetric monoidal closed category with respect to the smash
product,
and that every bilinear enriched functor
$F:\NGc_0^{op}\times \NGc_0 \to \NGc_0$
gives rise to a pair of generalized homology and cohomology theories,
denoted by $h_{\bullet}(-,F)$ and $h^{\bullet}(-,F)$ respectively,
such that
\[
h_n(X,F) \cong \pi_0 F(S^{n+k},\Sigma^k X), \quad
h^n(X,F) \cong \pi_0 F(\Sigma^k X,S^{n+k})
\]
hold whenever $k$ and $n+k$ are non-negative.

As an example, consider the bilinear enriched functor $F$
which assigns to $(X,Y)$ 
the mapping space from $X$ to the topological free abelian group $AG(Y)$
generated by the points of $Y$ modulo the relation $*\sim 0$.
The Dold-Thom theorem says that if $X$ is a CW-complex then 
the groups $h_n(X,F)$ and $h^n(X,F)$ are, respectively, isomorphic to 
the singular homology and cohomology groups of $X$.
But this is not the case for general $X$;
there exists a space $X$ such that $h_n(X,F)$ (resp.\ $h^n(X,F)$)
is not isomorphic to the singular homology (resp.\ cohomology) group of $X$.

The aim of this paper is to construct a bilinear enriched functor
such that for any space $X$ the associated cohomology groups are
isomorphic to the $\CF$ech cohomology groups of $X$.
Interestingly, it turns out that the corresponding homology groups
are isomorphic to the Steenrod homology groups for
any compact metrizable space $X$.
Thus we obtain a bibariant theory which ties together
the $\CF$ech cohomology and the Steenrod homology theories.

Recall that the $\CF$ech cohomology group of $X$
with coefficients group $G$ is defined to be the colimit
of the singlular cohomology groups
\[
\textit{\v{H}}^n(X,G) = \inlim_\lambda H^n(X^{\CF}_\lambda,G),
\]
where $\lambda$ runs through coverings of $X$ and
$X^{\CF}_\lambda$ is the \v{C}ech nerve corresponding to $\lambda$., i.e.
$v\in X_\lambda^{\CF}$ is a
vertex of $X^{\CF}_\lambda$ corresponding to an open set $V\in \lambda$.
On the other hand, the Steenrod homology group of a compact metric space
$X$ is defined as follows.
As $X$ is a compact metric space, there is a sequence 
$\{\lambda_i \}_{i\geq 0}$ of finite open covers of $X$ such that
$\lambda_0 = \{X\}$, $\lambda_i$ is a refinement of $\lambda_{i-1}$, and
$X$ is the inverse limit $\pnlim_i X^{\CF}_{\lambda_i}$.
According to \cite{F}, the Steenrod homology group of $X$ with
coefficients in the spectrum $ \Ss $ is defined to be the group
\[
H^{st}_{n}(X,\Ss) =
\pi_{n}\underleftarrow{{\rm holim}}_{\lambda_i}(X^{\CF}_{\lambda_i}\wedge \Ss )
\]
where $\underleftarrow{{\rm holim}}$ denotes the homotopy inverse limit.

Let $\NGcc_0$ be the subcategory of pointed numerically
generated compact metric spaces and pointed continuous maps.
For given a linear enriched functor $T:\NGc_0\to\NGc_0$,
let
\[
\mbox{\v{F}}:\NGc^{op}_0 \times \NGcc_0 \to \NGc_0
\]
be a bifunctor which maps $(X,Y)$ to the space $\inlim_\lambda \map_0(X_\lambda,\holim_{\mu_i} T(Y_{\mu_i}^{\CF})).$
Here $\lambda$ runs through coverings of $X$, and
$X_\lambda$ is the Vietoris nerves corresponding to $\lambda$ (\cite{P}).
The main results of the paper can be stated as follows.
\begin{Theorem}\label{TH2}
The functor $\FF$ is a bilinear enriched functor.
\end{Theorem}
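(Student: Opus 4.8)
The plan is to check, in order, that $\FF$ is a well-defined bifunctor into $\NGc_0$, that it is $\NGc_0$-enriched in both variables, and that it is linear in each variable separately; ``bilinear enriched'' is the conjunction of the last two. For the first point I would verify that every constituent operation stays inside $\NGc_0$ and is functorial. The Vietoris nerve $X\mapsto X_\lambda$ and the \v{C}ech nerve $Y\mapsto Y^{\CF}_{\mu_i}$ are simplicial complexes, hence numerically generated; $T$ preserves $\NGc_0$ by hypothesis; $\map_0(A,-)$ does so because it is the internal hom of the closed monoidal category $\NGc_0$ of \cite{numerical}; and $\holim$ and $\inlim$ of numerically generated spaces are again numerically generated, since $\NGc_0$ is closed under (co)limits. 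For functoriality of the two nerve constructions, a map $f\colon X'\to X$ pulls a covering $\lambda$ of $X$ back to a covering $f^{-1}\lambda$ of $X'$ and induces a simplicial map $X'_{f^{-1}\lambda}\to X_\lambda$, compatibly with refinement; a map $g\colon Y\to Y'$ in $\NGcc_0$, comparing cofinal sequences of finite covers, induces a morphism of the inverse systems of \v{C}ech nerves, hence a map $\holim_{\mu_i}T(Y^{\CF}_{\mu_i})\to\holim_{\nu_i}T(Y'^{\CF}_{\nu_i})$ after a cofinality argument. Combining these with the two-variable functoriality of $\map_0$ and with $\inlim_\lambda$ produces $\FF(f,g)\colon\FF(X,Y)\to\FF(X',Y')$, and the functor axioms follow from those of the pieces.

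For the enrichment I would note that each operation above is $\NGc_0$-enriched: $\map_0$ because $(\NGc_0,\wedge,\map_0)$ is closed monoidal; $T$ by hypothesis; $\holim$ and $\inlim$ because they are (co)limits in the enriched sense; and $X\mapsto\{X_\lambda\}$, $Y\mapsto\{Y^{\CF}_{\mu_i}\}$ because the nerve of a covering depends continuously on the space. A composite of enriched functors is enriched, so the induced morphisms
\[
\map_0(X',X)\wedge\map_0(Y,Y')\longrightarrow\map_0(\FF(X,Y),\FF(X',Y'))
\]
are maps in $\NGc_0$ and are compatible with composition and identities, which is precisely what an $\NGc_0$-enriched bifunctor requires.

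For linearity, fix $Y$ and put $Z:=\holim_{\mu_i}T(Y^{\CF}_{\mu_i})$, so that $\FF(-,Y)=\inlim_\lambda\map_0((-)_\lambda,Z)$. As filtered colimits in $\NGc_0$ are exact, it suffices that $X\mapsto\map_0(X_\lambda,Z)$ be a linear contravariant functor for each $\lambda$; since $\map_0(-,Z)$ sends homotopy cofibre sequences to homotopy fibre sequences, this reduces to the corresponding exactness of the Vietoris nerve, which has to be read off directly from its construction. Dually, fix $X$: then $\FF(X,-)=\inlim_\lambda\map_0(X_\lambda,\holim_{\mu_i}T((-)^{\CF}_{\mu_i}))$, and linearity follows by assembling the exactness of $T$ (by hypothesis), the fact that $\holim$ preserves homotopy fibre sequences, that $\map_0(X_\lambda,-)$ preserves homotopy limits, and that $\inlim_\lambda$ is exact, together with the analogous good behaviour of the \v{C}ech nerve construction on $\NGcc_0$.

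I expect the genuine obstacles to lie in the nerve constructions rather than in the formal bookkeeping: making $X\mapsto\{X_\lambda\}$ and $Y\mapsto\{Y^{\CF}_{\mu_i}\}$ functorial up to the coherence needed to induce maps of ind- and pro-systems (pullback of coverings, cofinality, and independence of the chosen cofinal sequence $\{\mu_i\}$ on $\NGcc_0$), and showing that these constructions are compatible enough with $\wedge$ and $\vee$ that linearity survives passage to the colimit and the homotopy limit. In particular, the compatibility of the refinement maps $X_\lambda\to X_{\lambda'}$ with the monoidal structure — hence the linearity of $X\mapsto\inlim_\lambda\map_0(X_\lambda,Z)$ — is the step I would expect to demand the most care.
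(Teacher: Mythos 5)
Your outline reproduces the paper's skeleton accurately — bilinearity in the first variable via a cofibration sequence of nerves, bilinearity in the second via linearity of $T$ together with commuting $\holim$ past homotopy fibres, and enrichment via continuity of the constituent functors — and you correctly predict that the real work sits in the nerve constructions. But the two statements you defer are precisely the two statements the paper has to prove, and as written they are genuine gaps rather than routine verifications. First, your assertion that ``the nerve of a covering depends continuously on the space'' is the heart of the enrichment claim, not a citable fact. The difficulty is that the induced map $g_\gamma\colon Y^{\CF}_{g^{\#}\gamma}\to (Y')^{\CF}_{\gamma}$ has a \emph{source that varies with $g$}, so the assignment $g\mapsto g_\gamma$ does not land in a fixed mapping space and its continuity into the colimit $\inlim_{\mu}\map_0(X_\mu,X'_\lambda)$ must be checked by hand against compact--open subbasic neighborhoods. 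The paper's Lemma~\ref{TH3} does this for the second variable, using compactness of $Y$ to interpolate a covering $\mu_l$ with $\mu_l>\overline{\mu}_l>g^{\#}\lambda$ (this is where the restriction to $\NGcc_0$ is actually used — your sketch gives no indication of where compactness enters); the continuity of the map $R$ in the first variable is then established separately via contiguity of $f_\lambda$ and $g_\lambda$ on $X_{f^{\#}\lambda\cap g^{\#}\lambda}$ and the homotopy extension property. None of this follows from ``a composite of enriched functors is enriched,'' because the nerve construction is not presented as an enriched functor to begin with — making it one is the theorem.

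Second, the ``exactness of the Vietoris nerve,'' which you say is to be ``read off directly from its construction,'' is the content of the paper's Lemma~\ref{heq}: one must show $(X\cup CA)^{\CF}_{\lambda}\simeq X^{\CF}_{\lambda_X}\cup C(A^{\CF}_{\lambda_A})$ by collapsing the contractible piece $(CA)^{\CF}_{\lambda_{CA}}$, transport this to Vietoris nerves via Dowker's comparison (Proposition~\ref{D}), and then invoke cofinality of the restricted coverings $\lambda_X$ and $\lambda_A$ among all coverings of $X$ and $A$ before passing to the colimit — a step your reduction to ``$X\mapsto\map_0(X_\lambda,Z)$ is linear for each $\lambda$'' silently assumes. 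So the proposal is a correct map of the terrain but omits the two load-bearing arguments; to be a proof it would need to supply the analogue of Lemma~\ref{heq} and, above all, the point-set continuity arguments for the nerve functors.
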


\begin{Theorem}\label{Sp}
Let $X$ be a compact metraizable space.
Then $h_{n}(X,\FF )=H_n(X,\Ss )$ is
 the Steenrod homology group with coefficients in the spectrum $\Ss =\{ T(S^k) \}$.
\end{Theorem}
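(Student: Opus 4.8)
The plan is to evaluate $h_n(X,\FF)=\pi_0\FF(S^{n+k},\Sigma^k X)$ directly from the defining formula and to match the answer with $\pi_n\holim_{\lambda_i}(X^{\CF}_{\lambda_i}\wedge\Ss)$. First I would reduce away the sphere $S^{n+k}$. Since $\FF$ is bilinear enriched (\thmref{TH2}) and contravariant and linear in its first variable, $\FF(S^{n+k},\Sigma^k X)\simeq\Omega^{n+k}\FF(S^0,\Sigma^k X)$, and $S^0$ admits only the trivial cover, whose Vietoris nerve is $S^0$ itself, so that $\map_0((S^0)_\lambda,Z)=Z$ and hence $\FF(S^0,\Sigma^k X)=\holim_{\mu_i}T\bigl((\Sigma^k X)^{\CF}_{\mu_i}\bigr)$. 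Therefore $h_n(X,\FF)\cong\pi_{n+k}\holim_{\mu_i}T\bigl((\Sigma^k X)^{\CF}_{\mu_i}\bigr)$, where $\mu_i$ runs over a cofinal sequence of finite open covers of the compact metrizable space $\Sigma^k X$ with $\Sigma^k X=\pnlim_i(\Sigma^k X)^{\CF}_{\mu_i}$.

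Second, I would compare the \v{C}ech nerves of $\Sigma^k X$ with suspensions of \v{C}ech nerves of $X$. Writing $\Sigma^k X$ as the iterated quotient of $I^k\times X$ collapsing the boundary directions, every fine finite open cover of $\Sigma^k X$ can be refined by one subordinate to the ``product'' covers built from a finite open cover $\lambda$ of $X$ together with a fixed cellular cover of $I^k$; on nerves this produces, after passing to a cofinal subsequence, a compatible family of homotopy equivalences $(\Sigma^k X)^{\CF}_{\mu_i}\simeq\Sigma^k\bigl(X^{\CF}_{\lambda_i}\bigr)$. Hence the towers $\{(\Sigma^k X)^{\CF}_{\mu_i}\}_i$ and $\{\Sigma^k(X^{\CF}_{\lambda_i})\}_i$ are isomorphic as pro-objects in the homotopy category, and consequently $\holim_{\mu_i}T\bigl((\Sigma^k X)^{\CF}_{\mu_i}\bigr)\simeq\holim_i T\bigl(\Sigma^k(X^{\CF}_{\lambda_i})\bigr)$.

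Third, I would invoke the linearity of $T$. A linear enriched functor with $T(*)\simeq *$ sends homotopy pushouts to homotopy pullbacks, so $\Ss=\{T(S^k)\}$ is an $\Omega$-spectrum and the assembly maps $K\wedge T(S^j)\to T(K\wedge S^j)$, which are natural in $K$, become weak equivalences after applying $\Omega^j$ when $K$ is a finite complex; equivalently $\Omega^k T(\Sigma^k K)\simeq\Omega^{\infty}(K\wedge\Ss)$ naturally in finite $K$. Applying this to the finite complexes $K=X^{\CF}_{\lambda_i}$, and using that $\holim$ commutes with $\Omega^k$, one gets a weak equivalence $\Omega^k\holim_i T\bigl(\Sigma^k(X^{\CF}_{\lambda_i})\bigr)\simeq\holim_i\Omega^{\infty}\bigl(X^{\CF}_{\lambda_i}\wedge\Ss\bigr)$ compatible with the Milnor $\lim^1$-sequences on both sides. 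Taking $\pi_{n+k}$ and shifting by $k$ yields $\pi_{n+k}\holim_i T\bigl(\Sigma^k(X^{\CF}_{\lambda_i})\bigr)\cong\pi_n\holim_i\bigl(X^{\CF}_{\lambda_i}\wedge\Ss\bigr)=H^{st}_n(X,\Ss)$. Chaining the three identifications proves the theorem.

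I expect the second step to be the main obstacle: controlling open covers of $\Sigma^k X$ near the cone points, exhibiting cofinally many covers whose nerves are honestly homotopy equivalent to suspensions of Vietoris/\v{C}ech nerves of $X$, and arranging these equivalences into a map of towers, requires the sort of covering-refinement and ANR-neighborhood arguments that are delicate for a general compact metrizable $X$. By contrast, the first step is formal once \thmref{TH2} is available, and the third is a standard consequence of linearity, modulo careful bookkeeping with the $\lim^1$-terms and with the numerically generated mapping spaces from \cite{numerical}.
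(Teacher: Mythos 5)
Your endgame (step three) is essentially the paper's proof: one compares the Milnor exact sequence $0 \to \pnlim^1_i H_{n+1}(X_i^{\CF},\Ss) \to H^{st}_n(X,\Ss) \to \pnlim_i H_n(X_i^{\CF},\Ss) \to 0$ with the Bousfield--Kan sequence $0 \to \pnlim^1_i \pi_{n+1}T(X_i^{\CF}) \to \pi_n\holim_i T(X_i^{\CF}) \to \pnlim_i \pi_n T(X_i^{\CF}) \to 0$, identifies the outer terms by Proposition \ref{numerical} applied to the finite nerves $X_i^{\CF}$, and produces the middle arrow from the assembly map $X\wedge T(S^k)\to T(X\wedge S^k)$; the five lemma finishes. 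So that part is sound and matches the paper, modulo the bookkeeping you acknowledge.

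The genuine gap is your step two, and it is a self-inflicted one. Since the theorem concerns $n\ge 0$ you may take $k=0$, so that $h_n(X,\FF)=\pi_0\FF(S^n,X)$ and the second variable is $X$ itself: the only nerve analysis required is in the \emph{first} variable, namely showing $\inlim_\lambda[S^n_\lambda,Z]\cong[S^n,Z]$. The paper does this by exhibiting a cofinal sequence of covers of $S^n$ whose nerves are homotopy equivalent to $S^n$ (Lemma \ref{open}); your own delooping argument from step one, $\FF(S^n,X)\simeq\Omega^n\FF(S^0,X)=\Omega^n\holim_{\mu_i}T(X^{\CF}_{\mu_i})$, would serve equally well. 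By instead keeping $k$ general you are forced to prove that the \v{C}ech expansion of $\Sigma^k X$ is pro-homotopy equivalent to the $k$-fold suspension of the \v{C}ech expansion of $X$. That statement is true for compacta, but it is a nontrivial shape-theoretic fact, and your sketch does not establish it: a ``product'' open set $U\times V\subset I^k\times X$ with $U$ meeting the collapsed boundary does not descend to an open set of the quotient $\Sigma^k X$ unless $V=X$, so the covers you propose are not cofinal as described, and the identification of their nerves with $\Sigma^k(X^{\CF}_{\lambda_i})$ near the cone points is exactly the delicate point you defer. As written, the proof is incomplete at its central reduction; rerouting through $k=0$ removes the obstacle entirely.
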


In particular, let us take $AG$ as $T$,
and denote
\[
\CF:\NGc_0^{op}\times \NGcc_0 \to \NGc_0
\]
be the corresponding bifunctor \v{F}.
\begin{Theorem}\label{Cor1}
For any pointed space $X$,
$h^n(X,\CF)$ is the $\CF$ech cohomology
group of $X$, and $h_n(X,\CF)$ is the Steenrod homology group
of $X$ if $X$ is a compact metralizable space. 
\end{Theorem}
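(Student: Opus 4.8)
The plan is to treat the cohomology and homology assertions separately, in each case reducing to the general representation formula recalled in the introduction.

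\textbf{The homology statement.} Let $X$ be compact metrizable. By \thmref{Sp}, $h_n(X,\CF)=H_n(X,\Ss)$ is the Steenrod homology of $X$ with coefficients in the spectrum $\Ss=\{AG(S^k)\}$. By the Dold--Thom theorem $\pi_i AG(S^k)\cong\widetilde H_i(S^k;\Z)$, so each $AG(S^k)$ is an Eilenberg--MacLane space of type $(\Z,k)$; moreover $AG(S^k)$ is a topological abelian group, hence has the homotopy type of a product of Eilenberg--MacLane spaces, and the structure maps $\Sigma AG(S^k)\to AG(S^{k+1})$ exhibit $\Ss$ as the integral Eilenberg--MacLane spectrum $H\Z$. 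Therefore $h_n(X,\CF)$ is exactly the ordinary Steenrod homology group of $X$.

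\textbf{The cohomology statement.} Fix $n$ and choose $k$ with $k\ge 0$ and $n+k\ge 1$. By the formula from \cite{numerical}, $h^n(X,\CF)\cong\pi_0\,\CF(\Sigma^kX,S^{n+k})$, and by definition this is $\pi_0$ of $\inlim_\lambda\map_0\bigl((\Sigma^kX)_\lambda,\,B\bigr)$ where $B:=\holim_{\mu_i}AG((S^{n+k})^{\CF}_{\mu_i})$ and $\mu_i$ runs through a cofinal sequence of finite covers of $S^{n+k}$. First I would identify $B$. Since $S^{n+k}$ is a compact metric ANR, its \v{C}ech (Vietoris) expansion is a shape expansion and ANRs have trivial shape-versus-homotopy discrepancy, so the pro-space $\{(S^{n+k})^{\CF}_{\mu_i}\}$ is pro-homotopy equivalent to the constant pro-space $S^{n+k}$. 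Applying the homotopy functor $AG$ levelwise preserves this pro-homotopy equivalence, and since homotopy inverse limit over a sequence carries it to a weak equivalence, $B\simeq AG(S^{n+k})\simeq K(\Z,n+k)$. Because $\pi_0$ commutes with the filtered colimit over refinements of coverings and each Vietoris nerve $(\Sigma^kX)_\lambda$ is a CW complex, we get
\[
h^n(X,\CF)\cong\inlim_\lambda\,[(\Sigma^kX)_\lambda,K(\Z,n+k)]_0\cong\inlim_\lambda\widetilde H^{n+k}\bigl((\Sigma^kX)_\lambda;\Z\bigr).
\]
Finally, the Vietoris and \v{C}ech expansions of a space yield the same colimit of cohomology groups (Dowker's theorem), so this colimit is the reduced \v{C}ech cohomology $\widetilde{\check H}^{\,n+k}(\Sigma^kX;\Z)$, and the suspension isomorphism for \v{C}ech cohomology identifies it with $\widetilde{\check H}^{\,n}(X;\Z)$. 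This proves the cohomology claim for arbitrary pointed $X$.

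\textbf{Main obstacle.} The delicate point is the identification $B\simeq K(\Z,n+k)$: one must make precise that passing to the homotopy inverse limit over all finite covers of the compact ANR $S^{n+k}$ and then applying $AG$ reproduces $AG(S^{n+k})$ up to weak equivalence. Concretely this requires the pro-homotopy stability of compact ANRs, the fact that $AG$ is a homotopy functor compatible with the enriched/homotopical framework of \cite{numerical} (so that it preserves pro-homotopy equivalences), and the vanishing of the $\lim^1$ obstruction for the homotopy inverse limit, which in turn follows from the Mittag--Leffler property of the pro-groups $\{\widetilde H_\ast((S^{n+k})^{\CF}_{\mu_i};\Z)\}$. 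Everything else --- the Dold--Thom computation, the commutation of $\pi_0$ with filtered colimits, Dowker's theorem, and the suspension isomorphism --- is standard.
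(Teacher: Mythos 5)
Your proposal is correct and follows the same overall skeleton as the paper: the homology claim is reduced to \thmref{Sp} with $T=AG$, and the cohomology claim is reduced to identifying the target $\holim_{\mu}AG((S^{n})^{\CF}_{\mu})$ with $AG(S^{n})\simeq K(\Z,n)$, after which both arguments land on $\inlim_\lambda[X_\lambda,K(\Z,n)]$ and invoke Dowker's comparison of Vietoris and \v{C}ech nerves. The one genuine difference is how that key identification is justified. The paper proves \lemref{open} by hand: it constructs, by induction on $n$, an explicit cofinal sequence of finite open coverings of $S^n$ whose nerves are homotopy equivalent to $S^n$, and then reads off $AG(S^n)\simeq\holim_i AG(S^n_{\lambda^n_i})$ from a ladder of equivalences. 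You instead appeal to the shape-theoretic stability of compact ANRs (the \v{C}ech expansion of $S^{n+k}$ is pro-homotopy equivalent to the constant tower), push this through $AG$ as a homotopy functor, and control $\holim$ via the Milnor $\pnlim^1$ sequence. Your route is shorter and more general (it would work for any compact ANR in place of a sphere), but imports machinery the paper deliberately avoids; the paper's route is elementary and self-contained but sphere-specific. Two minor remarks: your detour through $\Sigma^kX$ and the suspension isomorphism for \v{C}ech cohomology is harmless but unnecessary for $n\ge 0$, where one can take $k=0$ as the paper does; and your explicit identification of the spectrum $\{AG(S^k)\}$ with $H\Z$ via Dold--Thom is a step the paper leaves implicit, so your write-up is if anything more complete on the homology side.
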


Recall that the Steenrod homology group
 is related to the $\CF$ech homology group of $X$ by the exact sequence

\[
\xymatrix{
0 \ar[r] &\pnlim^1_{\lambda_i} \tilde{H}_{n+1}(X^{\CF}_{\lambda_i})  \ar[r] &
 H^{st}_n(X) \ar[r] 
& \tilde{H}_{n}(X) \ar[r] & 0 .\\ 
}
\]
If $X$ is a movable compactum then we have
$\pnlim^1_{\lambda_i} \tilde{H}_{n+1}(X^{\CF}_{\lambda_i})=0$, and hence the following corollary follows.

\begin{Corollary}\label{Cor}
Let $X$ be a movable compactum.
Then $h_n(X,\CF )$ is the $\CF$ech homology group of $X$.
\end{Corollary}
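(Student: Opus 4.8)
The plan is to derive the statement formally from Theorem~\ref{Cor1} together with the exact sequence displayed above, so that the only real content is the vanishing of the $\pnlim^1$-term for a movable compactum.

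First I would observe that a movable compactum is, in particular, a compact metrizable space, so Theorem~\ref{Cor1} applies and identifies $h_n(X,\CF)$ with the Steenrod homology group $H^{st}_n(X)$. Substituting this into
\[
0 \to \pnlim^1_{\lambda_i} \tilde{H}_{n+1}(X^{\CF}_{\lambda_i}) \to H^{st}_n(X) \to \tilde{H}_n(X) \to 0,
\]
it suffices to show $\pnlim^1_{\lambda_i} \tilde{H}_{n+1}(X^{\CF}_{\lambda_i}) = 0$; then the middle map becomes an isomorphism onto $\tilde{H}_n(X)=\pnlim_{\lambda_i}\tilde{H}_n(X^{\CF}_{\lambda_i})$, which is the \v{C}ech homology group, as claimed.

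Next I would use the fact that the inverse sequence $\{X^{\CF}_{\lambda_i}\}_{i\ge 0}$ of \v{C}ech nerves is a polyhedral expansion of $X$ (an inverse sequence of finite polyhedra with $\pnlim_{\lambda_i} X^{\CF}_{\lambda_i}=X$), so that shape properties of $X$ are computed on this tower. Movability of the compactum $X$ is then equivalent to movability of the pro-space $\{X^{\CF}_{\lambda_i}\}$ in the pro-homotopy category; applying the reduced homology functor $\tilde{H}_{n+1}$, which sends movable pro-spaces to movable pro-groups, shows that the tower of abelian groups $\{\tilde{H}_{n+1}(X^{\CF}_{\lambda_i})\}$ with its bonding maps $p_{ij}$ is movable. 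A movable tower satisfies the Mittag--Leffler condition: given $i$, pick $j\ge i$ as in the movability condition, so that $\im(p_{ij})\subseteq\im(p_{ik})$ for all $k\ge i$; for $k\ge j$ the reverse inclusion holds trivially, whence $\im(p_{ij})=\im(p_{ik})$ and the images stabilize. Since $\pnlim^1$ of a Mittag--Leffler tower vanishes, $\pnlim^1_{\lambda_i}\tilde{H}_{n+1}(X^{\CF}_{\lambda_i})=0$, which finishes the argument.

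The hard part --- indeed the only non-formal step --- is the passage from Borsuk's geometric notion of movability of the compactum to the algebraic Mittag--Leffler property of the particular tower $\{\tilde{H}_{n+1}(X^{\CF}_{\lambda_i})\}$. This rests on two standard facts of shape theory (cf.\ Marde\v{s}i\'c--Segal): that the \v{C}ech nerves of a cofinal refining sequence of finite open covers form a shape-defining expansion of $X$, and that the homology functor carries movable pro-objects to movable pro-groups. Granting these, the remaining steps are purely diagram-theoretic and present no difficulty.
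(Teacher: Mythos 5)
Your proposal is correct and follows the same route as the paper: combine Theorem~\ref{Cor1} (a movable compactum is compact metrizable, so $h_n(X,\CF)\cong H^{st}_n(X)$) with the Milnor exact sequence and the vanishing of $\pnlim^1_{\lambda_i}\tilde{H}_{n+1}(X^{\CF}_{\lambda_i})$ for movable $X$. The only difference is that you actually justify the $\pnlim^1$-vanishing (movability of the nerve tower implies the Mittag--Leffler condition on the homology pro-groups), whereas the paper simply asserts it; your added argument is sound.
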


The paper is organized as follows. In Section 2 we 
recall from \cite{numerical} the category $\NGc_0$ and the passage
from bilinear enriched functors to generalized homology
and cohomology theories. And we recall that Vietoris and 
$\CF$ech nerves; 
In Section 3 we prove Theorem \ref{TH2}; 
Finally, in Section 4 we prove Theorems 2 and \ref{Cor1}.

\bigskip

\paragraph{Acknowledgement}
We thank Professor K.~Shimakawa for calling my attention to the subject
and for 
useful conversations while preparing the manuscript.

\section{Preliminaries}
\subsection{Homology and cohomology theories via bifunctors}

Let $\NGc_0$ be the category of pointed numerically
generated topological spaces and 
pointed continuous maps.
In \cite{numerical} we showed that $\NGc_0$ satisfies the following properties:
\begin{enumerate}
\item It contains pointed CW-complexes;
\item It is complete and cocomplete;
\item It is monoidally closed in the sense that there is an internal hom
  $Z^Y$ satisfying a natural bijection
  $\hom_{\NGc_0}(X \wedge Y,Z) \cong \hom_{\NGc_0}(X,Z^Y)$;
\item There is a coreflector $\nu \colon \Top_0 \to \NGc_0$ such that
  the coreflection arrow $\nu{X} \to X$ is a weak equivalence;
\item The internal hom $Z^Y$ is weakly equivalent to the space
  of pointed maps from $Y$ to $Z$ equipped with the compact-open topology.
\end{enumerate}
Throughout the paper, we write $\map_0(Y,Z)=Z^Y $ for any
$Y,\ Z \in \NGc_0$.

A map $f \colon X \to Y$ between topological spaces is said to be
numerically continuous if the composite
$f\circ \sigma \colon \Delta^n \to Y$ is continuous
for every singular simplex $\sigma \colon \Delta^n \to X$.
We have the following.

\begin{Proposition}{$($\cite{numerical}$)$}\label{5}
  Let $f \colon X \to Y$ be a map between numerically generated spaces.
  Then  $f$ is numerically continuous
  if and only if $f$ is continuous.
\end{Proposition}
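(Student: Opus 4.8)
The plan is to argue directly from the defining property of a numerically generated space: $X$ carries the final topology with respect to the set of its singular simplexes $\sigma \colon \Delta^n \to X$, $n \geq 0$. Concretely, this means that a subset $U \subseteq X$ is open if and only if $\sigma^{-1}(U)$ is open in $\Delta^n$ for every such $\sigma$; equivalently, a map with domain $X$ is continuous as soon as its composite with every singular simplex is continuous. Only this property of the source is needed, so in fact the hypothesis that $Y$ is numerically generated will play no role.

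The implication ``continuous $\Rightarrow$ numerically continuous'' is immediate and requires no hypothesis at all: if $f$ is continuous then $f \circ \sigma$ is a composite of continuous maps for every singular simplex $\sigma$, hence continuous.

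For the converse, assume $f$ is numerically continuous and let $U \subseteq Y$ be open. For an arbitrary singular simplex $\sigma \colon \Delta^n \to X$ we have $\sigma^{-1}(f^{-1}(U)) = (f \circ \sigma)^{-1}(U)$, which is open in $\Delta^n$ because $f \circ \sigma$ is continuous by hypothesis. Since this holds for every $\sigma$ and $X$ has the final topology with respect to its singular simplexes, $f^{-1}(U)$ is open in $X$; as $U$ was arbitrary, $f$ is continuous. Equivalently, one may observe that numerical continuity of $f$ is precisely continuity of the composite $\nu{X} \to X \xrightarrow{f} Y$, and $\nu{X} = X$ exactly when $X$ is numerically generated.

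I do not expect any substantial obstacle here: the single point requiring attention is the correct invocation of the defining universal property of the final topology — recognizing that ``numerically generated'' is exactly the condition under which simplexwise continuity upgrades to genuine continuity — after which the statement is a direct unwinding of the definitions.
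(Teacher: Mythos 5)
Your proof is correct: the paper states this proposition without proof (it is quoted from \cite{numerical}), and your argument is exactly the standard one that the cited source uses, namely that numerical continuity of $f$ says each $\sigma^{-1}(f^{-1}(U))=(f\circ\sigma)^{-1}(U)$ is open, which forces $f^{-1}(U)$ to be open because $X$ carries the final topology with respect to its singular simplexes. Your observation that the hypothesis on $Y$ is not actually used is also accurate.
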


From now on, we assume that $\C_0$ satisfies the following conditions:
(i) $\C_0$ contains all finite CW-complexes.
(ii) $\C_0$ is closed under finite wedge sum. 
(iii) If $A \subset X$ is an inclusion of objects in $\C_0$ then
its cofiber $X \cup CA$ belongs to $\C_0$;
in particular, $\C_0$ is closed under the suspension functor
$X \mapsto \Sigma X$.

\begin{Definition}
Let $\C_0$ be a full subcategory of $\NGc_0$.  A functor $T \colon \C_0 \to \NGc_0$ is called
  {\em enriched $($or continuous$)$ \/} if the map 
\[
T:\map_0(X,X')\to \map_0(T(X),T(X')),
\] 
which assigns $T(f)$ to every $f$,
is a pointed continuous map.
\end{Definition}
Note that if $f$ is constant,
then so is $T(f)$.

\begin{Definition}
An enriched functor $T$ is called $linear$ if 
for any pair of a pointed space $X$, a
 sequence 
\[
T(A)\to T(X)\to T(X\cup CA)
\]
induced by the cofibration sequence 
$A\to X \to X\cup CA$, is a homotopy fibration sequence.
\end{Definition}
\begin{Example}
Let $AG:\mbox{CW}_0\to \NGc_0$ be the functor which assigns to a pointed 
CW-complex
$(X,x_0)$ the topological abelian group $AG(X)$ generated by
the points of $X$ modulo the relation $x_0 \sim 0$.
Then $AG$ is a linear enriched functor. (see \cite{numerical})
\end{Example}

\begin{Theorem}{$($\cite[Th 6.4]{numerical}$)$}
A linear enriched functor
$T$ defines a generalized homology $\{ h_n(X,T) \}$ satisfying 
\begin{equation*}
h_n(X,T)= \begin{cases}
            \pi_nT(X), & n\ge 0\\
            \pi_0T(\Sigma^{-n}X), & n<0.
            \end{cases}\\
\end{equation*}
\end{Theorem}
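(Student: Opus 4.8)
The plan is to check the axioms of a reduced homology theory on $\C_0$ in turn: functoriality, homotopy invariance, exactness for cofibre sequences, the suspension isomorphism, and the finite wedge axiom. Functoriality is automatic, since $h_n(-,T)$ is $\pi_n$ (resp.\ $\pi_0$) applied to $T$, precomposed with an iterated suspension when $n<0$. For homotopy invariance I would use only enrichment: a pointed homotopy $f_0\simeq f_1\colon X\to Y$ is, by adjunction, a path in $\map_0(X,Y)$ from $f_0$ to $f_1$, and the continuous map $T\colon\map_0(X,Y)\to\map_0(TX,TY)$ carries it to a path from $T(f_0)$ to $T(f_1)$; thus $T(f_0)\simeq T(f_1)$ and $\pi_nT(f_0)=\pi_nT(f_1)$.

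Next I would draw out the structural consequences of linearity. Applying linearity to the inclusion $X\hookrightarrow CX$ of $X$ into its cone --- whose cofibre is $\Sigma X$, and with $CX$ itself lying in $\C_0$ as the mapping cone of $\mathrm{id}_X$ (condition (iii)) --- yields a homotopy fibration sequence $T(X)\to T(CX)\to T(\Sigma X)$. Since the identity map of the one-point space is constant, $T(\ast)$ is a point, so $T(CX)\simeq T(\ast)=\ast$ by homotopy invariance, and hence $T(X)\simeq\Omega T(\Sigma X)$; iterating, $T(X)\simeq\Omega^{k}T(\Sigma^{k}X)$ for every $k\ge 0$. In particular $T(X)$ is a double loop space, so each $\pi_nT(X)$ --- and therefore each $h_n(X,T)$ --- is an abelian group, and the equivalences $\pi_nT(X)\cong\pi_{n+1}T(\Sigma X)$ are natural in $X$. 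Comparing with the defining formula one checks directly that these combine into a natural isomorphism $h_n(X,T)\cong h_{n+1}(\Sigma X,T)$ for all $n\in\Z$, and that $h_n(X,T)\cong\pi_{n+k}T(\Sigma^{k}X)$ whenever $k\ge 0$ and $n+k\ge 0$; the latter description is the one I would use below.

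For exactness, let $A\hookrightarrow X$ be a cofibration with cofibre $X/A\simeq X\cup CA$. Since $\Sigma$ preserves cofibrations and $\Sigma^{k}(X/A)\simeq\Sigma^{k}X\cup C\Sigma^{k}A$, linearity gives for each $k\ge 0$ a homotopy fibration sequence $T(\Sigma^{k}A)\to T(\Sigma^{k}X)\to T(\Sigma^{k}(X/A))$, whose long exact homotopy sequence, re-indexed via $h_n(-,T)\cong\pi_{n+k}T(\Sigma^{k}-)$, reads
\[
\cdots\to h_n(A,T)\to h_n(X,T)\to h_n(X/A,T)\xrightarrow{\ \partial\ }h_{n-1}(A,T)\to\cdots
\]
and is exact for $n\ge -k$; letting $k\to\infty$ gives the exact sequence in every degree. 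The finite wedge axiom then follows by applying this sequence to the split cofibration $X_1\hookrightarrow X_1\vee X_2\to X_2$, which forces the boundary maps to vanish and breaks the sequence into split short exact sequences $0\to h_n(X_1,T)\to h_n(X_1\vee X_2,T)\to h_n(X_2,T)\to 0$. The step I expect to be the main obstacle is not any single conceptual point but the bookkeeping that the boundary maps $\partial$ coming from different values of $k$ agree under the suspension isomorphisms --- that the relevant squares commute --- which is a diagram chase resting on the naturality of the homotopy-fibre construction and of the Puppe sequence.
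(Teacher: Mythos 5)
The paper does not prove this statement; it is quoted verbatim from \cite[Th.\ 6.4]{numerical}, so there is no in-paper proof to compare against. Your argument is the standard one such a proof would take and appears sound: homotopy invariance from enrichment via paths in $\map_0(X,Y)$, the delooping $T(X)\simeq \Omega T(\Sigma X)$ obtained by applying linearity to $X\hookrightarrow CX$ together with $T(CX)\simeq T(\ast)=\ast$, and the long exact sequence by reindexing the homotopy fibration sequences $T(\Sigma^k A)\to T(\Sigma^k X)\to T(\Sigma^k(X/A))$ for $k$ large. The only real gap is the one you already flag --- checking that the boundary maps for different $k$ are compatible under the suspension isomorphisms --- and that is routine naturality of the homotopy-fibre construction, so I see no obstruction.
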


Next we introduce the notion of a bilinear enriched functor, and describe a passage
from a bilinear enriched functor to generalized cohomology and generalized homology theories.
We assume that $\C_0'$ satisfies the same conditions of 
$\C_0$.
\begin{Definition}\label{dual}
Let $\C_0$ and $\C_0'$ be full subcategories of $\NGc_0$.
A {\em bifunctor} $F \colon \C_0^{op}\times \C_0' \to
 \NGc_0$ is a function which
\begin{enumerate}
\item to each objects $X\in \C_0$ and $Y\in \C_0'$
 assigns an object $F(X,Y)\in \NGc_0$;
\item to each $f\in \map_0(X,X')$, $g\in \map_0(Y,Y')$ assigns a continuous map $F(f,g)\in \map_0(F(X',Y),~F(X,Y'))$.

$F$ is required to satisfy the following equalities: 
\begin{enumerate}
\item $F(1_X,1_Y)=1_{F(X,Y)}$;
\item $F(f,g)=F(1_X,g)\circ F(f,1_Y)=F(f,1_{Y'})\circ F(1_{X'},g)$;
\item $F(f'\circ f,1_Y)=F(f,1_Y)\circ F(f',1_Y)$,
  $F(1_X,g'\circ g)=F(1_X,g')\circ F(1_X,g)$.
\end{enumerate}
\end{enumerate}
\end{Definition}

\begin{Definition}
  A bifunctor $F \colon \C_{0}^{op}\times \C_0 \to \NGc_0$ is called
  {\em enriched\/} if the map 
\[
F:\map_0(X,X')\times \map_0(Y,Y')\to \map_0(F(X',Y),F(X,Y')),
\] 
which assigns $F(f,g)$ to every pair $(f,g)$,
is a pointed continuous map.
\end{Definition}
Note that if either $f$ or $g$ is constant,
then so is $F(f,g)$.
\begin{Definition}\label{biexact}
For any pairs of pointed spaces $(X,A)$ and $(Y,B)$, $F$ is $bilinear$ if the
 sequences 
\begin{enumerate}
\item $F(X\cup CA,Y)\to F(X,Y)\to F(A,Y)$
\item $F(X,B)\to F(X,Y)\to F(X,Y\cup CB)$,
\end{enumerate}
induced by the cofibration sequences 
$A\to X \to X\cup CA$ and
$B\to Y \to Y\cup CB$, are homotopy fibration
sequences.
\end{Definition}
\begin{Example}
Let $T:\NGc_0\to \NGc_0$ be a linear enriched functor,
and let $F(X,Y)=\map_0(X,T(Y))$ for $X,Y\in \NGc_0$.
Then $F:\NGc_0^{op}\times \NGc_0 \to \NGc_0$ is a bilinear enriched functor.
\end{Example}

\begin{Theorem}{$($\cite[Th 7.4]{numerical}$)$}
A bilinear enriched functor
$F$ defines a generalized cohomology $\{ h^n(-,F) \}$ and
a generalized homology $\{ h_n(-,F) \}$ such that 
\begin{equation*}
h_n(Y,F)= \begin{cases}
            \pi_0F(S^n,Y) & n\ge 0\\
            \pi_0F(S^0,\Sigma^{-n}Y) & n<0,
            \end{cases}\\
~~~~h^n(X,F)= \begin{cases}
            \pi_0F(X,S^n) & n\ge 0\\
            \pi_{-n}F(X,S^0) & n<0,
            \end{cases}
\end{equation*}
hold for any $X\in \C_0$ and $Y\in \C_0'$.
\end{Theorem}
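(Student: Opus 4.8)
The plan is to reduce everything to the linear case already recorded in \cite[Th 6.4]{numerical} and to two ``delooping'' equivalences that come from freezing one variable of $F$. First I would record two consequences of bilinearity and enrichedness. \textbf{(A)} For each fixed $X\in\C_0$ the functor $F(X,-)\colon\C_0'\to\NGc_0$ is a linear enriched functor: enrichedness follows by restricting the structure map of $F$ to $\{1_X\}\times\map_0(Y,Y')$, and linearity is exactly condition (2) of the bilinearity hypothesis with the first variable frozen. \textbf{(B)} Dually, for each fixed $Y\in\C_0'$ the functor $F(-,Y)\colon\C_0^{op}\to\NGc_0$ is enriched and, by condition (1), carries every cofibration sequence $A\to X\to X\cup CA$ to a homotopy fibration sequence $F(X\cup CA,Y)\to F(X,Y)\to F(A,Y)$. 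I would also record the auxiliary fact that $F$ sends pointed contractible spaces (in either variable) to contractible spaces: if $Z$ is pointed contractible then $1_Z$ is pointed homotopic to the constant map, so applying the continuous map $F(1_X,-)$ (resp.\ $F(-,1_Y)$) shows that $1_{F(X,Z)}$ (resp.\ $1_{F(Z,Y)}$) is null-homotopic. In particular $F(CA,Y)\simeq *$ and $F(X,CB)\simeq *$. Note that spheres and disks are finite CW-complexes and hence lie in both $\C_0$ and $\C_0'$, which are closed under forming cofibers.

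For the homology statement I would apply \cite[Th 6.4]{numerical} to the linear enriched functor $T=F(S^0,-)$ and simply set $h_n(Y,F):=h_n(Y,F(S^0,-))$. By that theorem this is a generalized homology theory with $h_n(Y,F)=\pi_n F(S^0,Y)$ for $n\ge 0$ and $h_n(Y,F)=\pi_0 F(S^0,\Sigma^{-n}Y)$ for $n<0$; the second case is already the asserted formula. It then remains to identify $\pi_n F(S^0,Y)$ with $\pi_0 F(S^n,Y)$ for $n\ge 0$. Applying (B) to the cofibration sequence $S^{k-1}\to CS^{k-1}\to S^{k}$ gives a homotopy fibration sequence $F(S^{k},Y)\to F(CS^{k-1},Y)\to F(S^{k-1},Y)$ with contractible total space, whence $F(S^{k},Y)\simeq\Omega F(S^{k-1},Y)$; iterating yields $\pi_0 F(S^n,Y)\cong\pi_n F(S^0,Y)$, as required.

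For the cohomology statement there is no previously recorded theorem, so I would build the theory by hand from $F(-,S^\bullet)$. Applying (A) to the cofibration sequence $S^{m}\to CS^{m}\to S^{m+1}$ gives a homotopy fibration sequence $F(X,S^m)\to F(X,CS^m)\to F(X,S^{m+1})$ with contractible total space, hence a natural equivalence $F(X,S^m)\simeq\Omega F(X,S^{m+1})$. Consequently the groups $\pi_{m-n}F(X,S^m)$ are independent of $m$ for $m\ge\max(n,0)$, and I define $h^n(X,F)$ to be this common value; it equals $\pi_0 F(X,S^n)$ for $n\ge 0$ and $\pi_{-n}F(X,S^0)$ for $n<0$, matching the statement. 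Contravariant functoriality and homotopy invariance of each $h^n$ are immediate from the bifunctor structure and enrichedness of $F(-,S^m)$. For a cofibration sequence $A\to X\to X\cup CA$ in $\C_0$, observation (B) gives a homotopy fibration sequence $F(X\cup CA,S^m)\to F(X,S^m)\to F(A,S^m)$, whose long exact homotopy sequence, reindexed through $\pi_{m-n}F(-,S^m)=h^n(-)$, is exact in all degrees $n\le m$; since $m$ is arbitrary and the reindexings are compatible via the delooping equivalences above, these assemble into a bi-infinite exact sequence $\cdots\to h^n(X\cup CA)\to h^n(X)\to h^n(A)\xrightarrow{\delta}h^{n+1}(X\cup CA)\to\cdots$. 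The suspension isomorphism $h^n(X)\cong h^{n+1}(\Sigma X)$ follows from $F(\Sigma X,S^m)\simeq\Omega F(X,S^m)$, obtained by applying (B) to $X\to CX\to\Sigma X$, and finite additivity follows from the split cofibration sequence $Y\to Y\vee Z\to Z$; together these are the axioms of a generalized cohomology theory.

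The genuine content is concentrated in the two delooping equivalences $F(\Sigma X,Y)\simeq\Omega F(X,Y)\simeq F(X,\Sigma Y)$; everything else is bookkeeping. These rest on (i) the hypotheses that $\C_0$ and $\C_0'$ contain the relevant cones and cofibers, so the sequences in play stay inside the domain of $F$, and (ii) the contractibility of $F$ on contractible spaces, which is exactly where enrichedness of $F$ is used essentially. The remaining delicate point is organizational: checking that the finite-range exact sequences produced by the homotopy fibration sequences patch together into honest bi-infinite long exact sequences with well-defined and natural connecting homomorphisms. I expect this to be the main, though routine, obstacle — it is precisely the standard passage from an $\Omega$-prespectrum to a cohomology theory, carried out here with $\{F(X,S^m)\}_{m\ge 0}$ in the cohomology case and $\{F(S^m,Y)\}_{m\ge 0}$ in the homology case.
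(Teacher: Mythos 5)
This theorem is quoted from \cite[Th 7.4]{numerical} and the present paper contains no proof of it, so there is nothing internal to compare against; judged on its own, your reconstruction is correct and follows the standard route: bilinearity plus the contractibility of $F$ on cones (which, as you note, is where enrichedness enters, via the remark that $F(f,g)$ is constant when $f$ or $g$ is) yields the delooping equivalences $F(S^{k},Y)\simeq\Omega F(S^{k-1},Y)$ and $F(X,S^m)\simeq\Omega F(X,S^{m+1})$, from which the two $\Omega$-prespectra $\{F(S^m,Y)\}$ and $\{F(X,S^m)\}$ produce the stated theories. This is essentially the argument of the cited reference, and the only points you leave as ``bookkeeping'' (naturality of the connecting maps and compatibility of the reindexings) are indeed routine.
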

\begin{Proposition}\label{numerical}{$($\cite{numerical}$)$}
If $X$ is a $CW$-complex, we have 
$h_n(X,F) =H_n(X,\Ss )$ and $h^n(X,F)=H^n(X,\Ss)$,
the generalized homology and cohomology groups
with coefficients in the spectrum $\Ss =\{ F(S^0,S^n)~|~n\geq 0 \}$.
\end{Proposition}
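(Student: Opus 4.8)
The plan is to recognise both sides as generalized (co)homology theories on CW-complexes that are linked by a natural transformation inducing an isomorphism on coefficients, and then to invoke the uniqueness theorem. By \cite[Th.~7.4]{numerical} (recalled above) $h_*(-,F)$ and $h^*(-,F)$ already satisfy the homotopy, exactness and additivity axioms on $\C_0$, while $H_*(-,\Ss)$ and $H^*(-,\Ss)$ are the usual spectral theories; so no axiom has to be reproved, and it suffices to build the comparison maps.

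First I would record the loop equivalences forced by bilinearity. Applying Definition~\ref{biexact} to the cofibration sequences $A\to CA\to\Sigma A$ and $B\to CB\to\Sigma B$, and noting that $F(CA,Y)$ and $F(X,CB)$ are weakly contractible (apply the same definition to $*\hookrightarrow X$), one obtains natural weak equivalences $F(\Sigma A,Y)\simeq\Omega F(A,Y)$ and $F(X,B)\simeq\Omega F(X,\Sigma B)$. Hence $\{F(S^0,S^k)\}_k=\Ss$ is an $\Omega$-spectrum; $h_n(X,F)\cong\pi_nF(S^0,X)$ for $n\ge 0$, and in general $h_n(X,F)$ is $\pi_n$ of the prespectrum $\{F(S^0,\Sigma^kX)\}_k$; dually $h^n(X,F)\cong[X,F(S^0,S^n)]_*$ up to the usual spectrum reindexing. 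Iterating the first equivalence on spheres gives $F(S^d,S^n)\simeq\Omega^dF(S^0,S^n)$, from which the coefficient groups match: $h_n(S^0,F)\cong\pi_n\Ss$ and $h^n(S^0,F)\cong\pi_{-n}\Ss$, the colimits computing $\pi_*\Ss$ being already stable.

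The comparison maps come from assembly. The linear enriched functor $F(S^0,-)$ has natural assembly maps $X\wedge F(S^0,S^k)\to F(S^0,\Sigma^kX)$; applying $\pi_{n+k}$, using $\pi_{n+k}F(S^0,\Sigma^kX)\cong\pi_nF(S^0,X)$, and passing to the colimit over $k$ yields a natural transformation
\[
\tau\colon H_n(X,\Ss)=\inlim_k\pi_{n+k}\bigl(X\wedge F(S^0,S^k)\bigr)\longrightarrow\pi_nF(S^0,X)=h_n(X,F),
\]
which is an isomorphism on coefficients because the assembly map is the identity when $X=S^0$ and $\Ss$ is an $\Omega$-spectrum. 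Dually, enrichment of $F$ in the first variable provides a natural coassembly map $X\wedge F(X,S^n)\to F(S^0,S^n)$ — adjoint to $X=\map_0(S^0,X)\to\map_0\bigl(F(X,S^n),F(S^0,S^n)\bigr)$ — whose own adjoint defines $\tau'\colon h^n(X,F)=[S^0,F(X,S^n)]_*\to[X,F(S^0,S^n)]_*=H^n(X,\Ss)$, again the identity on $S^0$.

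It then remains to verify that $\tau$ and $\tau'$ are morphisms of (co)homology theories, i.e.\ compatible with the connecting homomorphisms; this reduces to the naturality of the (co)assembly maps relative to the cofibration sequences of Definition~\ref{biexact}. Once that is in hand the uniqueness theorem — cellular induction and the five lemma on finite complexes, extended to arbitrary CW-complexes by the wedge axiom of \cite[Th.~7.4]{numerical} together with a $\pnlim^1$ argument in the cohomology case, or equivalently a comparison of Atiyah--Hirzebruch spectral sequences — finishes the proof. I expect this coherence step to be the main obstacle: checking that the family of assembly maps really assembles into a map of theories commuting with suspension and boundary. The loop equivalences and the coefficient computation are formal, and the Eilenberg--Steenrod axioms themselves are already guaranteed by \cite[Th.~7.4]{numerical}, so the assembly-map comparison is the only essentially new ingredient.
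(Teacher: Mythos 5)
You should first be aware that the paper contains no proof of this Proposition: it is quoted from \cite{numerical} (reference [SYH]) as an external input, so there is no internal argument to measure yours against. Your reconstruction --- extract $F(\Sigma A,Y)\simeq\Omega F(A,Y)$ and $F(X,B)\simeq\Omega F(X,\Sigma B)$ from Definition~\ref{biexact}, so that $\Ss=\{F(S^0,S^k)\}$ is an $\Omega$-spectrum, then build assembly and coassembly comparison maps from the enrichment, check them on coefficients, and invoke uniqueness of (co)homology theories on CW-complexes --- is the standard route, and it is consistent with how the present paper itself argues in Section~4, where the very same assembly map $i'\colon X\wedge T(S^k)\to T(X\wedge S^k)$ together with a Milnor $\pnlim^1$ comparison is used to identify $h_n(X,\FF)$ with Steenrod homology. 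As a sketch of what [SYH] must prove, your outline is sound, and your identification of the coefficient isomorphism (assembly is the identity on $X=S^0$) is correct.

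Two places where the sketch leans on unestablished assertions deserve to be named. First, you attribute the additivity (wedge) axiom for $h_*(-,F)$ and $h^*(-,F)$ to Theorem~7.4 of \cite{numerical}; as quoted here that theorem only asserts the existence of generalized theories with the stated coefficient formulas, and without additivity the uniqueness argument (and the $\pnlim^1$ step in cohomology) only reaches finite complexes, whereas the Proposition is claimed for all CW-complexes. Second, the coherence step you yourself flag --- that $\tau$ and $\tau'$ commute with the suspension and boundary isomorphisms, i.e.\ genuinely constitute morphisms of (co)homology theories --- is where the real content sits and is not discharged; it requires checking the compatibility of the (co)assembly maps with the fibration sequences of Definition~\ref{biexact} in each variable. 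Also, your justification that $F(CA,Y)$ is contractible by ``applying the definition to $*\hookrightarrow X$'' is not quite the right mechanism; the clean argument is that $F(*,Y)$ is a point because $F(1_*,1_Y)$ is constant, and enriched functors preserve the pointed homotopy equivalence $CA\simeq *$. None of these is a wrong turn, but each is a gap that a complete proof must fill.
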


\subsection{Vietoris and $\CF$ech nerves}
For each $X \in \NGc_{0}$,~let $\lambda$ be an open covering of $X$.
According to \cite{P}, the Vietoris nerve of $\lambda$ 
is a simplicial set in which an
$n$-simplex is an ordered $(n+1)$-tuple
$(x_0,x_1,\cdots ,x_n)$ of points contained in 
an open set $U\in \lambda$.
Face and degeneracy operators are respectively given by 
\[
d_i(x_0,\cdots ,x_n)=(x_0,x_1,\cdots ,x_{i-1},x_{i+1},
\cdots ,x_n)
 \] 
 and
 \[
 s_i(x_0,x_1,\cdots x_n)=(x_0,x_1,\cdots x_{i-1},x_i,
 x_i,x_{i+1},\cdots ,x_n),~~0\le i \le n.
 \]
We denote the realization of
 the Vietoris nerve of $\lambda$ by $X_{\lambda}$. 
If $\lambda$ is a refinement of $\mu $,
then there is a canonical map $\pi_{{\mu}}^{{\lambda}}:X_{\lambda}\to X_{\mu}$ 
induced by the identity map of $X$.

 The relation between the Vietoris and the $\CF$ech nerves
 is given by the following Proposition due to Dowker.

\begin{Proposition}\label{D}$ ($\cite{D}$) $
The $\CF$ech nerve $X_\lambda^{\CF}$ and the Vietoris nerve $X_\lambda$ have the same homotopy type.
\end{Proposition}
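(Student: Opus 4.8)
This is Dowker's theorem on the two nerves of a relation (\cite{D}), specialized to the incidence relation $R\subseteq X\times\lambda$ given by $x\,R\,U\iff x\in U$. The plan is to realize both nerves from a single auxiliary complex and to compare them through it. Up to the classical comparison between a simplicial complex and the simplicial set of its (possibly repeating) ordered simplices, which does not change the homotopy type of the realization, the Vietoris nerve $X_\lambda$ is the realization of the abstract simplicial complex $K$ with vertex set $X$ in which $\{x_0,\dots,x_n\}$ is a simplex exactly when some $U\in\lambda$ contains all the $x_i$, and the \v{C}ech nerve $X_\lambda^{\CF}$ is the realization of the complex $K'$ with vertex set $\lambda$ in which $\{U_0,\dots,U_n\}$ is a simplex exactly when $U_0\cap\cdots\cap U_n\neq\emptyset$.

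Next I would introduce the \emph{Dowker complex} $L$: its vertices are the pairs $(x,U)\in R$, and a finite set $\{(x_0,U_0),\dots,(x_n,U_n)\}$ is a simplex of $L$ precisely when $x_i\in U_j$ for all $i$ and $j$. The two coordinate projections $(x,U)\mapsto x$ and $(x,U)\mapsto U$ define simplicial maps $p\colon L\to K$ and $q\colon L\to K'$ (for a simplex $S$ of $L$, any fixed pair in $S$ witnesses that $p(S)$ is a simplex of $K$ and that $q(S)$ is a simplex of $K'$). The heart of the argument is to show that $p$, and by the symmetric argument $q$, is a homotopy equivalence; then $X_\lambda=|K|\simeq|L|\simeq|K'|=X_\lambda^{\CF}$.

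To prove $p$ is a homotopy equivalence I would pass to face posets $\mathcal F(-)$, whose order complexes are the barycentric subdivisions, so that $|\Delta\mathcal F(K)|\cong|K|$ and likewise for $L$, and apply Quillen's Theorem~A to the induced monotone map $\mathcal F(L)\to\mathcal F(K)$, $S\mapsto p(S)$. Thus I must show that for each simplex $\sigma=\{x_0,\dots,x_n\}$ of $K$ the subposet $P_\sigma=\{\,S\in\mathcal F(L): p(S)\subseteq\sigma\,\}$ is contractible. Choosing $U^{\ast}\in\lambda$ with $x_0,\dots,x_n\in U^{\ast}$, the element $M=\{(x_0,U^{\ast}),\dots,(x_n,U^{\ast})\}$ lies in $P_\sigma$, and the assignments $c\colon S\mapsto S\cup\{(x,U^{\ast}):x\in p(S)\}$ and $d\colon S\mapsto\{(x,U^{\ast}):x\in p(S)\}$ are monotone self-maps of $P_\sigma$ satisfying $\mathrm{id}\le c\ge d\le\mathrm{const}_{M}$; by the homotopy lemma for comparable monotone maps this yields $\mathrm{id}\simeq\mathrm{const}_M$ on $|\Delta P_\sigma|$, so $P_\sigma$ is contractible.

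I do not expect a serious obstacle here: the one thing needing care is that $c$ and $d$ genuinely land in $L$ and in $P_\sigma$, i.e. that adjoining to a simplex of $L$ pairs whose second coordinate is the fixed $U^{\ast}$ preserves the cross-intersection condition, which is immediate from $p(S)\subseteq\sigma\subseteq U^{\ast}$; everything else is the routine bookkeeping of Theorem~A (or of the generalized nerve lemma), plus the remark that barycentric subdivision and the passage between a simplicial complex and its ordered simplicial set preserve the homotopy type of the realization. Chaining the equivalences gives the claim. Alternatively, one may bypass $L$ and exhibit the explicit simplicial map $\mathrm{Sd}(K')\to K$ sending a chain $\tau_0\subsetneq\cdots\subsetneq\tau_m$ of \v{C}ech simplices to $(x_{\tau_0},\dots,x_{\tau_m})$, where $x_\tau$ is a chosen point of $\bigcap_{U\in\tau}U$, and prove directly via the acyclic carrier theorem that it is a homotopy equivalence.
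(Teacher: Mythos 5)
The paper does not actually prove this statement: it is quoted verbatim from Dowker's paper \cite{D} and used as a black box, so there is no internal argument to compare yours against. Your proposal is a correct, self-contained proof of Dowker duality by the now-standard route: replace the simplicial-set Vietoris nerve by the underlying simplicial complex $K$ of finite subsets of $X$ lying in a common member of $\lambda$ (the comparison between a simplicial complex and the simplicial set of its ordered, possibly degenerate, simplices is standard and preserves the homotopy type of the realization), form the Dowker complex $L$ on the incidence relation, and show both projections are homotopy equivalences via the Quillen fiber lemma. Your verification of the key points is sound: the cross-intersection condition does make $p$ and $q$ simplicial, the maps $c$ and $d$ do land in $P_\sigma$ because $p(S)\subseteq\sigma\subseteq U^{\ast}$, and the chain $\mathrm{id}\le c\ge d\le\mathrm{const}_M$ of comparable monotone maps contracts $|\Delta P_\sigma|$ by the order-homotopy lemma. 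This differs from Dowker's original 1952 argument (which is closer to your alternative: an explicit map from the barycentric subdivision of one nerve to the other, built from chosen witness points, shown to be a homotopy equivalence by contiguity/carrier arguments); the poset-theoretic proof is cleaner and manifestly symmetric in the two nerves, at the cost of invoking Quillen's Theorem~A, while Dowker's is elementary. Either way the statement is established in the generality the paper needs (arbitrary spaces and arbitrary open covers), so no gap.
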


According to \cite{D}, for arbitrary topological space, 
the Vietoris and $\CF$ech homology groups are isomorphic 
and the Alexander-Spanier and $\CF$ech cohomology groups 
are isomorphic. 

\section{Proof of Theorem \ref{TH2}}

Let $T$ be a linear enriched functor.
We define a bifunctor $\FF:\NGc_0^{op}\times \NGcc_0 \to \NGc_0$ as follows.
For $X \in \NGc_0$ and $Y\in \NGcc_0$, we put
\[  
\displaystyle{\FF(X,Y)=\inlim_\lambda \map_0(X_\lambda,
~\holim_{\mu_i} T(Y_{\mu_i}^{\CF}))},
\]
where $\lambda$ is an open covering of $X$ and 
$\{\mu_i \}_{i\geq 0}$ is a set of finite open covers of $Y$ 
such that
$\mu_0 = \{Y\}$, $\mu_i$ is a refinement of $\mu_{i-1}$, 
and
$Y$ is the inverse limit $\pnlim_i Y^{\CF}_{\mu_i}$.

Given based maps $f:X\to X'$ and $g:Y\to Y'$, 
we define a map
\[
\FF(f,g) \in 
\map_0(\FF(X',Y),\FF(X,Y'))
\]
as follows. 
Let $\nu$ and $\gamma$ be open covering
of $X'$ and $Y'$ respectively, and
let $f^{\#}\nu =\{f^{-1}(U)~|~U\in \nu \}$ and $g^{\#}\gamma=\{g^{-1}(V)~|~V\in \gamma \}$.
Then $f^{\#}\nu$ and $g^{\#}\gamma$ are 
open coverings of $X$ and $Y$ respectively.
By the definition of the nerve,
there are natural maps 
$
f_\nu :X_{f^{\#}\nu}\to X^{'}_\nu 
$
and
$
g_\gamma :Y^{\CF}_{g^{\#}\gamma}\to (Y')^{\CF}_\gamma .
$
Hence we have the map 
\[
T(g_\gamma)^{f_\nu}:T(Y^{\CF}_{g^{\#}\gamma})^{X'_\nu}\to T((Y')^{\CF}_{\gamma})^{X_{f^{\#}\nu}}
\]
induced by $f_\nu$
and
$g_\gamma$.
Thus we can define
\[
\FF(f,g)=\inlim_\nu \holim_\gamma T(g_\gamma)^{f_\nu}
:\FF(X',Y)\to \FF(X,Y').
\]
~\\
\textbf{Theorem \ref{TH2}.}
The functor \v{F} is a bilinear enriched functor.

First we prove the bilinearity of \v{F}.
For $Z$,
we prove that
the sequence 
\[
\FF (X\cup CA,Z)\to \FF (X,Z)\to \FF (A,Z)\]
is a homotopy fibration sequence.
Let $A\to X \to X\cup CA$ be a cofibration sequence.
Let $\lambda$ be an open covering of $X\cup CA$, and
let   %
   $\lambda_X$, $\lambda_{CA}$ and $\lambda_A$ 
be consists of those $U\in \lambda$ such that
$U$ intersects with $X$, $CA$, and $A$, respectively.
We need the following lemma.

\begin{Lemma}\label{heq}
We have a homotopy equivalence
   \[
   (X\cup CA)_\lambda^{\CF} 
   \simeq X_{\lambda_X}^{\CF}\cup C(A_{\lambda_{A}}^{\CF}).
   \]
\end{Lemma}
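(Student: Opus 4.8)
The plan is to cut the \v{C}ech nerve of $\lambda$ into the parts seen by $X$, by $CA$ and by $A$, exhibit it as a pushout of these, and then convert the $CA$--part into a cone. Write $\lambda_X,\lambda_{CA},\lambda_A$ for the three subfamilies of the statement; then $\lambda=\lambda_X\cup\lambda_{CA}$ (each member is non-empty and contained in $X\cup CA$, hence meets $X$ or $CA$), while $\lambda_A\subseteq\lambda_X$ and $\lambda_A\subseteq\lambda_{CA}$. The traces $\{U\cap X\}_{U\in\lambda_X}$, $\{U\cap CA\}_{U\in\lambda_{CA}}$, $\{U\cap A\}_{U\in\lambda_A}$ are open covers of $X$, $CA$, $A$ with \v{C}ech nerves $X_{\lambda_X}^{\CF}$, $CA_{\lambda_{CA}}^{\CF}$, $A_{\lambda_A}^{\CF}$; and since for a finite $\sigma\subseteq\lambda_A$ one has $\bigcap_{U\in\sigma}U\cap A\subseteq \bigcap_{U\in\sigma}U\cap X$ and likewise with $CA$, the natural maps $A_{\lambda_A}^{\CF}\hookrightarrow X_{\lambda_X}^{\CF}$ and $A_{\lambda_A}^{\CF}\hookrightarrow CA_{\lambda_{CA}}^{\CF}$ are inclusions of subcomplexes, hence cofibrations after realisation. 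Finally, since $(-)_\lambda^{\CF}$ enters the definition of $\FF$ only through the colimit $\inlim_\lambda$, we are free to replace $\lambda$ by a member of a cofinal family of \emph{good} covers --- those for which every non-empty finite intersection of members, together with its traces on $X$, on $CA$ and on $A$, is contractible, and no member straddles $X\setminus A$ and $CA\setminus A$ while missing $A$; such covers can be produced from an arbitrary one by refining with the help of a collar neighbourhood of $A$ supplied by the cofibration $A\hookrightarrow X$.

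The first main step is to identify $(X\cup CA)_\lambda^{\CF}$ with the pushout $X_{\lambda_X}^{\CF}\cup_{A_{\lambda_A}^{\CF}}CA_{\lambda_{CA}}^{\CF}$. Every simplex $\sigma$ of $(X\cup CA)_\lambda^{\CF}$ lies in $X_{\lambda_X}^{\CF}$ or in $CA_{\lambda_{CA}}^{\CF}$, because $\bigcap_{U\in\sigma}U$ is non-empty and a point of it lying in $X$ (resp.\ in $CA$) realises $\sigma$ as a simplex of $X_{\lambda_X}^{\CF}$ (resp.\ of $CA_{\lambda_{CA}}^{\CF}$); and the ``no-straddling'' condition, together with contractibility of the relevant traces, forces the overlap $X_{\lambda_X}^{\CF}\cap CA_{\lambda_{CA}}^{\CF}$ to be exactly $A_{\lambda_A}^{\CF}$, with the two subcomplexes glued along it and no further identification of vertices or simplices. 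This is where the cofibration $A\hookrightarrow X$ is genuinely used.

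The second step is that $CA_{\lambda_{CA}}^{\CF}$ is contractible: for such a $\lambda$ the trace cover of $CA$ is a good cover of the contractible cone, so its nerve is contractible by the Nerve Theorem, and under this equivalence $A_{\lambda_A}^{\CF}\hookrightarrow CA_{\lambda_{CA}}^{\CF}$ corresponds to $A\hookrightarrow CA$. One then invokes the standard gluing lemma: for a cofibration $i\colon K\hookrightarrow L$ with $L$ contractible, the pushout $B\cup_K L$ is homotopy equivalent rel $B$ to the mapping cone $B\cup_K CK$ (compare $L\simeq *$ with $CK\simeq *$, both under $K$). With $i=\bigl(A_{\lambda_A}^{\CF}\hookrightarrow CA_{\lambda_{CA}}^{\CF}\bigr)$ and $B=X_{\lambda_X}^{\CF}$ this yields
\[
(X\cup CA)_\lambda^{\CF}\ =\ X_{\lambda_X}^{\CF}\cup_{A_{\lambda_A}^{\CF}}CA_{\lambda_{CA}}^{\CF}\ \simeq\ X_{\lambda_X}^{\CF}\cup_{A_{\lambda_A}^{\CF}}C\bigl(A_{\lambda_A}^{\CF}\bigr)\ =\ X_{\lambda_X}^{\CF}\cup C\bigl(A_{\lambda_A}^{\CF}\bigr),
\]
which is the assertion. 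If one prefers to argue with Vietoris nerves (which are what appear in the definition of $\FF$), apply Proposition~\ref{D} first; it is natural and hence carries every inclusion above to its Vietoris analogue.

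I expect the first step --- or, more precisely, the choice of the cofinal family that makes it work --- to be the main obstacle, because the equivalence is false for a general covering: a member meeting both $X\setminus A$ and $CA\setminus A$ but not $A$, or a ``bad'' cover of the cone $CA$, produces a nerve that does not split as the pushout above and need not be a mapping cone. The real content is thus the reduction, via the cofibration $A\hookrightarrow X$, to a good collar-adapted cofinal family of covers (and checking that such a family is indeed cofinal in the relevant generality); after that, the remaining steps are routine applications of the Nerve Theorem and the gluing lemma.
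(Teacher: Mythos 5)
Your proof follows the same skeleton as the paper's: decompose the nerve of $\lambda$ into the pieces seen by $X$ and by $CA$, observe that the $CA$-piece is contractible, and replace it by the cone on the $A$-piece. The paper does this in three lines: it asserts $(X\cup CA)^{\CF}_{\lambda}=X^{\CF}_{\lambda_X}\cup (CA)^{\CF}_{\lambda_{CA}}$ ``by definition'', inserts a collar $A^{\CF}_{\lambda_A}\times I$, and asserts $(CA)^{\CF}_{\lambda_{CA}}\simeq *$. You are right that both assertions fail for a general covering --- a member straddling $X\setminus A$ and $CA\setminus A$ without meeting $A$ breaks the pushout description, and the nerve of an arbitrary cover of the contractible space $CA$ need not be contractible --- so your identification of the first step as the real content is accurate, and your observation that one should only aim for a cofinal family of coverings (which is all the subsequent colimit argument needs, though it is strictly less than the lemma as stated) is a genuine improvement over the paper.

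The gap is that the reduction you flag as the main obstacle is not actually carried out, and in the generality in which the lemma is used it cannot be carried out as you describe. A collar neighbourhood of $A$ does let you refine away the straddling members, but your first and second steps also require every finite intersection of members (and its traces on $X$, $CA$, $A$) to be contractible, i.e.\ a good cover in the sense of the Nerve Theorem. Cofinal families of good covers exist for manifolds and CW-complexes, but not for arbitrary objects of $\NGc_0$, nor even for arbitrary compact metric spaces (the second variable of $\FF$ ranges over $\NGcc_0$, which contains solenoids and other spaces with no good covers at all); for such spaces one only knows the pro-statement that the nerves of a contractible space are pro-trivial, not that any individual nerve is contractible. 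So as written the argument establishes the lemma only for spaces admitting cofinal good covers. A correct treatment has to either weaken what is proved to a statement about the pro-objects $\{(X\cup CA)^{\CF}_{\lambda}\}_{\lambda}$ (which is what the colimit/homotopy-limit arguments downstream actually consume), or find a decomposition argument that avoids the Nerve Theorem entirely; neither your proposal nor the paper's proof does this.
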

\begin{proof}
By the definition of the $\CF$ceh nerve, we have 
$   (X\cup CA)_\lambda^{\CF} = X_{\lambda_X}^{\CF}\cup (CA)_{\lambda_{CA}}^{\CF}$.
By the homotopy equivalence 
\[
A_{\lambda_{A}}^{\CF}=A_{\lambda_{A}}^{\CF} \times \{0\} \simeq A_{\lambda_{A}}^{\CF}\times I
\]
where $I$ is the unit interval,
we have
\[
X_{\lambda_X}^{\CF}\cup (CA)_{\lambda_{CA}}^{\CF}
\simeq X_{\lambda_X}^{\CF}\cup A_{\lambda_A}^{\CF}\times I\cup (CA)_{\lambda_{CA}}^{\CF}.
\]
Since $(CA)_{\lambda_{CA}}^{\CF}\simeq *$, we have 
\[
X_{\lambda_X}^{\CF}\cup A_{\lambda_A}^{\CF}\times I\cup (CA)_{\lambda_{CA}}^{\CF}
   \simeq X_{\lambda_X}^{\CF}\cup C(A_{\lambda_{A}}^{\CF}).
\]
Hence we have $(X\cup CA)_\lambda 
   \simeq X_{\lambda_X}^{\CF}\cup C(A_{\lambda_{A}}^{\CF}).$
\end{proof}
By Proposition \ref{D} and Lemma \ref{heq},
the sequence
  \[ A_{\lambda_A}\to X_{\lambda_X} \to (X\cup CA)_\lambda 
  \]
is a homotopy cofibration sequence.
Hence the sequence \[
[(X\cup CA)_\lambda ,Z]\to [X_{\lambda_X},Z]\to [A_{\lambda_A},Z]
\]
   is an exact sequence for any $\lambda$.
Since the nerves of the form $\lambda_X$ (resp. $\lambda_A$) are cofinal in the set of nerves
of $X$ (resp. $A$),
we conclude that
the sequence 
\[
\FF (X\cup CA,Z)\to \FF (X,Z)\to \FF (A,Z)\]
   is a homotopy fibration sequence.

Now we show that the sequence 
  $\FF (Z,A)\to \FF (Z,X)\to \FF (Z,X\cup CA)$
   is a homotopy fibration sequence.
By the linearity of $T$, the sequence
   \[
   T(A^{\CF}_{\lambda_A})\to T(X^{\CF}_{\lambda_X})\to T((X\cup CA)^{\CF}_\lambda)
   \]
   is a homotopy fibration sequence.
   Since the fibre $T(A^{\CF}_{\lambda_{A}})$ is a homeomorphic to the inverse limit
   \[
   \pnlim (*\to T((X\cup CA)^{\CF}_\lambda) 
   \leftarrow T(X^{\CF}_{\lambda_X})),
   \]
   we have
   \[
   \displaystyle \begin{array}{l}
   \pnlim (*\to \holim_{\lambda}T((X\cup CA)^{\CF}_\lambda) 
   \leftarrow \holim_{\lambda_{X}}T(X^{\CF}_{\lambda_X}))
    \\
~~   \simeq \pnlim ~\holim_{\lambda}(*\to T((X\cup CA)^{\CF}_\lambda) 
   \leftarrow T(X^{\CF}_{\lambda_X}))\\
~~   \simeq \holim_{\lambda}\pnlim (*\to T((X\cup CA)^{\CF}_\lambda) 
~~   \leftarrow T(X^{\CF}_{\lambda_X}))\\
~~   \simeq \holim_{\lambda}T(A^{\CF}_{\lambda_A}).\\
   \end{array}
   \]
This implies that
the sequence 
   \[
   \holim_{\lambda_A} T(A_{\lambda_A}^{\CF})\to 
   \holim_{\lambda_X} T(X_{\lambda_X}^{\CF})\to 
   \holim_{\lambda}T((X\cup CA)_\lambda^{\CF})
   \] 
is a homotopy fibration sequence,
hence so is $\FF (Z,A)\to \FF (Z,X)\to \FF (Z,X\cup CA)$.

Next we prove the continuity of $\FF$.
Let $F(X,Y)=\map_0(X,\holim_{\mu_i} T(Y^{\CF}_{\mu_i}))$,
so that we have $\FF(X,Y)=\inlim_\lambda F(X_\lambda, Y)$.
We need the following lemma.

\begin{Lemma}\label{TH3}
The functor F is an enriched bifunctor.
\end{Lemma}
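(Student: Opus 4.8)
The goal is to show that $F(X,Y)=\map_0(X,\holim_{\mu_i} T(Y^{\CF}_{\mu_i}))$ is an enriched bifunctor, i.e.\ that the assignment $(f,g)\mapsto F(f,g)$ defines a pointed continuous map
\[
\map_0(X,X')\times \map_0(Y,Y')\to \map_0(F(X',Y),F(X,Y')).
\]
The plan is to factor this through the two variables separately, using the standard exponential adjunctions available in $\NGc_0$ (property (3) of Section~2). For the contravariant variable, the map $f\mapsto F(f,1_Y)$ is precomplementation $f^{*}$, which is continuous because the evaluation map is continuous in the closed monoidal structure on $\NGc_0$; this is essentially the statement that $\map_0(-,Z)$ is an enriched functor, which is built into the closed structure. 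So the only genuine content is the covariant variable: I must show that
\[
g\longmapsto \holim_{\mu_i} T(g^{\CF}):\holim_{\mu_i}T(Y^{\CF}_{\mu_i})\to\holim_{\nu_i}T((Y')^{\CF}_{\nu_i})
\]
is continuous in $g\in\map_0(Y,Y')$, where $\nu_i$ is a cofinal tower of finite covers of $Y'$ and $g^{\CF}:Y^{\CF}_{g^{\#}\nu_i}\to (Y')^{\CF}_{\nu_i}$ is the induced simplicial map; then $F(1_X,g)$ is postcomposition with this map and hence continuous, and finally $F(f,g)=F(1_X,g)\circ F(f,1_Y)$ by Definition~\ref{dual}(2)(b), so $F(f,g)$ depends continuously on the pair.

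For the covariant continuity I would proceed in three steps. First, observe that the assignment $\gamma\mapsto g^{\#}\gamma$ of induced covers, together with the simplicial maps $g_\gamma:Y^{\CF}_{g^{\#}\gamma}\to(Y')^{\CF}_\gamma$, is natural in $g$: for a fixed vertex-to-vertex description of the $\CF$ech nerve, a point $y$ lies in $g^{-1}(V)$ iff $g(y)\in V$, so the map on nerves is literally induced by applying $g$ coordinatewise, which is visibly continuous in $g$ (it is, on each simplex, a finite product of copies of $g$). Second, since $T$ is enriched (Definition of enriched functor), the composite $g\mapsto T(g_\gamma)$ is continuous into $\map_0(T(Y^{\CF}_{g^{\#}\gamma}),T((Y')^{\CF}_\gamma))$. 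Third, I pass to the (homotopy) inverse limit: the homotopy limit $\holim_{\mu_i}$ is built from the cosimplicial replacement as a space of tuples of maps out of simplices subject to compatibility, hence $\map_0(W,\holim_{\mu_i}T(Y^{\CF}_{\mu_i}))$ is itself a limit of mapping spaces, and a map into a limit is continuous iff each component is; combining this with Step~2 and the fact that the tower $\{g^{\#}\nu_i\}$ is cofinal among covers of $Y$ (so the $\holim$ over it agrees up to weak equivalence with $\holim_{\mu_i}$) gives the desired continuity of $g\mapsto\holim T(g^{\CF})$ as a map to $\map_0(\holim_{\mu_i}T(Y^{\CF}_{\mu_i}),\holim_{\nu_i}T((Y')^{\CF}_{\nu_i}))$.

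The main obstacle is the bookkeeping at the level of covers: unlike an ordinary functor, $g$ does not act on a fixed indexing poset but reindexes it via $\gamma\mapsto g^{\#}\gamma$, and one must check that the induced map on homotopy limits is well-defined (independent of cofinal choices up to coherent homotopy) and, crucially, \emph{continuous} in $g$ rather than merely defined for each $g$. I expect to handle this by working with the \v{C}ech nerve as a functor on the category of all open covers (not just a cofinal tower), so that $g^{\#}$ becomes an honest functor between cover categories and $\holim$ over all covers is manifestly functorial; the reduction to the cofinal metric tower $\{\mu_i\}$ is then a separate cofinality argument that does not interact with continuity. A secondary point to verify is that all the constructions stay inside $\NGc_0$ — the realizations $Y^{\CF}_\lambda$ of \v{C}ech nerves are CW-complexes hence numerically generated, $T$ lands in $\NGc_0$ by hypothesis, and $\holim$ and $\map_0$ preserve $\NGc_0$ by completeness and monoidal closedness (properties (2) and (3) of Section~2) — so the target $\map_0(F(X',Y),F(X,Y'))$ makes sense and the enrichment is genuinely in $\NGc_0$.
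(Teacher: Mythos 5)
Your overall decomposition matches the paper's: the paper also writes $F(X,Y)=F_2(X,F_1(Y))$ with $F_2=\map_0$, dismisses the contravariant variable as clear, and reduces everything to continuity in $g$ of the induced map on homotopy limits, checked componentwise via the projections $p^n_\lambda:\holim_\lambda (Y')^{\CF}_\lambda\to\map_0(\Delta^n,(Y')^{\CF}_\lambda)$. So the skeleton is right. But the step you label as the ``only genuine content'' is exactly where your argument stops being a proof. You assert that the induced map on nerves is ``literally induced by applying $g$ coordinatewise \dots visibly continuous in $g$.'' That description fits the Vietoris nerve (simplices are tuples of points), not the $\CF$ech nerve used in the second variable, whose vertices are the open sets $g^{-1}(V)$; and in either model the real difficulty is that the \emph{domain} $Y^{\CF}_{g^{\#}\gamma}$ itself varies with $g$, so ``continuity in $g$'' is not a statement about a fixed mapping space. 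Your closing paragraph correctly identifies this as the crux (``continuous in $g$ rather than merely defined for each $g$''), but the proposed fix --- treating $g^{\#}$ as a functor on the category of all covers --- only delivers well-definedness and functoriality for each fixed $g$; it says nothing about what happens when $g$ is perturbed.

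The paper's proof of this point is a genuine point-set argument for which your proposal has no substitute. Given $(g,\alpha)$ and a subbasic neighborhood $W_{K,U}$ of $p^n_\lambda G'_1(g,\alpha)$, one uses compactness of $Y$ (this is where the restriction to $\NGcc_0$ enters) to choose a finite cover $\mu_l$ with $\mu_l>\overline{\mu_l}>g^{\#}\lambda$, and then builds a neighborhood $W=W_1\cap W_2$ of $g$ in $\map_0(Y,Y')$ with two effects: for every $h\in W$ the nerve $Y^{\CF}_{h^{\#}\lambda}$ contains simplices corresponding isomorphically to the relevant simplices of $Y^{\CF}_{g^{\#}\lambda}$, so that $h_\lambda$ and $g_\lambda$ agree after passing through $Y^{\CF}_{\mu_l}$, and moreover $\overline{\mu_l}>h^{\#}\lambda$ still holds. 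Only then does $p^n_\lambda G'_1(h,\beta)(K)\subset U$ follow. Separately, your claim that the tower $\{g^{\#}\nu_i\}$ is cofinal among covers of $Y$ is false in general (take $g$ constant); what is actually used is the opposite refinement, namely that the fixed tower $\{\mu_i\}$ on $Y$ refines each pullback $g^{\#}\lambda$, giving maps $Y^{\CF}_{\mu_l}\to Y^{\CF}_{g^{\#}\lambda}\to (Y')^{\CF}_\lambda$ out of the fixed homotopy limit. Without the compactness and contiguity argument the lemma is not proved.
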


\begin{proof}
  Let $F_1(Y)=\holim_{\mu_i} T(Y_{\mu_i}^{\CF})$ and
  $F_2(X,Z)=\map_0(X,Z)$,
  so that we have $F(X,Y) = F_2(X,F_1(Y))$.
Clearly $F_2$ is continuous.

Let $G_1$ be the functor maps $Y$ to $\holim_{\mu_i} Y_{\mu_i}^{\CF}$.
Since $T$ is enriched,
$F_1$ is continuous if and only if 
so is $G_1$.
It suffices to show that
the map $G'_1 \colon \map_0(Y,Y') \times \holim_{\mu_i} Y_{\mu_i}^{\CF} \to \holim_{\lambda_j} (Y')_{\lambda_j}^{\CF}$, adjoint to $G_1$,
  is continuous for any $Y$ and $Y'$.
  Given an open covering $\lambda$ of $Y^{'}$,
  let $p^n_{\lambda}$ be the natural map
  $\holim _\lambda (Y')^{\CF}_\lambda \to \map_0(\Delta^n,(Y')^{\CF}_{{\lambda}})$.
 Then $G'_1$ is continuous if so is the composite
  \[
  p^n_{\lambda}\circ G'_1 \colon  \map_0(Y,Y') \times \holim_{\mu_i} Y_{\mu_i}^{\CF} \to
  \map_0(\Delta^n,(Y')^{\CF}_{\lambda})
  \]
  for every $\lambda \in \mbox{Cov}(Y')$ and every $n$.

  Let $(g,\alpha) \in \map_0(Y,Y') \times \holim_{\mu_i} Y^{\CF}_{\mu_i}$, and
  let $W_{K,U} \subset \map_0(\Delta^n,(Y')^{\CF}_{{\lambda}})$ be an open neighborhood of
  $p^n_{\lambda}(G'_1(g,\alpha))
$,
  where $K$ is a compact set of $\Delta^n$ and $U$ is an
open set of $(Y')^{\CF}_\lambda$.

  Let us choose simplices $\sigma$ of $Y_{g^{\sharp}\lambda}^{\CF}$
  with vertices $g^{-1}(U(\sigma,k)),$ 
where $U(\sigma,k)\in \lambda$ for
  $0 \leq k \leq \dim\sigma$. 
  Let
  \[
  \textstyle
  O(\sigma) = \bigcap_{0 \leq k \leq \dim\sigma} U(\sigma,k)
  \subset Y'.
  \]
Let us choose a point $y_\sigma \in \bigcap_{0 \leq k \leq \dim\sigma} g^{-1}(U(\sigma,k))$,
then $g(y_\sigma )\in O(\sigma)$.
  Let $W_1$ be the intersection of all $W_{y_\sigma,O(\sigma)}$.

There is an integer $l$ such that 
\[
\mu_l>\overline{\mu}_l>g^{\#}\lambda
\] 
where $\overline{\mu_l}$ is a closed covering $\{ \overline{V}|V\in \mu_l \}$
of $Y$.
Thus for any $U\in \mu_l$, there is an open set
$V_U\in g^{\#}\lambda$ such that $\overline{U}\subset g^{-1}(V_U)$.
Since $Y$ is a compact set, $\overline{U}$ is compact.
Let $W_2$ be the intersection of $W_{\overline{U},V_U}$,
and let $W=W_1\cap W_2$.

Since $\mu_l>g^{\#}\lambda$, we have 
\[
p^n_\lambda (G'_1(g,\alpha))=(g_\lambda)_* (\pi^{\mu_l}_{
g^{\#}\lambda})_*p^n_{\mu_l}\alpha.
\]
where $(g_\lambda)_*$ and $(\pi^{\mu_l}_{
g^{\#}\lambda})_*$ are induced by $g_\lambda :Y_{g^{\#}\lambda}^{\CF}\to (Y')_{
\lambda}^{\CF}$ and
$\pi^{\mu_l}_{
g^{\#}\lambda}: Y_{\mu_l}^{\CF}\to Y_{
g^{\#}\lambda}^{\CF}$, respectively.
Let
\[
W'=(p^n_{\mu_l})^{-1}(W_{K,(\pi^{\mu_l}_{
g^{\#}\lambda})^{-1} (g_\lambda)^{-1}(U) }).
\]
Then $W \times W'$ is a neighborhood of $(g,\alpha)$ in
  $\map_0(Y,Y') \times \holim_{\mu_i} Y_{\mu_i}$.
  To see that $p_{\lambda}\circ G'_1$ is continuous at $(g,\alpha)$,
  we need only show that $W \times W'$ is contained in
  $(p_{\lambda} \circ G'_1)^{-1}(U)$.
  Suppose $(h,\beta)$ belongs to $W \times W'$.
  Since $W$ is contained in $W_1$, we have
  \[
  \textstyle y_\sigma \in h^{-1}(O(\sigma)) \subset
  \bigcap_{0 \leq k \leq \dim\sigma} h^{-1}(U(\sigma,k)).
  \]
  This means that the vertices $h^{-1}(U(\sigma,k)) \in h^{\sharp}\lambda$,
  $0 \leq k \leq \dim\sigma$,
  determine simplices $\sigma'$ of $Y_{h^{\sharp}\lambda}$
 each corresponding to each
  $\sigma \subset Y_{g^{\sharp}\lambda}$.
Thus we have an isomorphism
\[
s:Y_{h^{\sharp}\lambda}^{\CF}\to Y_{g^{\sharp}\lambda}^{\CF},
\] 
\[
h^{-1}(U(\sigma,k))\mapsto g^{-1}(U(\sigma,k)).
\]
Moreover since $W$ is contained in $W_2$,
we have $\overline{\mu_l}>h^{\#}\lambda$.

Since the commutative diagram
\[  
\xymatrix{
Y_{\mu_l}^{\CF}\ar[r]\ar[rd]&Y_{g^{\#}\lambda}^{\CF} \ar[r]^{g_\lambda}&(Y')^{\CF}_\lambda\\
&Y_{h^{\#}\lambda}^{\CF}\ar[ru]_{h_\lambda}\ar[u]^s& 
}\]
is commutative,
we have the equation
\[
p^n_{\lambda}\circ G'_1(h,\beta)(K)
=%
  h_{\lambda}\pi^{\mu_l}_{h^{\#}\lambda}(\beta)(K)
=
 g_{\lambda}\pi^{\mu_l}_{g^{\#}\lambda}(\beta)(K)
\]
Since $g_{\lambda}\pi^{\mu_l}_{g^{\#}\lambda}(\beta)(K)$ 
 is continued in $U$, so is
$p^n_{\lambda}\circ G'_1(h,\beta)(K)$.

  Thus $p^n_{\lambda}\circ G'_1$ is continuous for all
  $\lambda \in \mbox{Cov}(Y')$, and hence so is
  $G'_1 \colon \map_0(Y,Y') \times \holim_{\mu_i} Y_{\mu_i}^{\CF} \to \holim_{\lambda_j} (Y')_{\lambda_j}^{\CF}$.
\end{proof}

We are now ready to prove Theorem \ref{TH2} of $\FF$.
  For given pointed spaces $X$, $Y$ and a covering $\mu$ of $X$,
  let $i_{\mu}$ denote the natural map
  $F(X_{\mu},Y) \to \inlim_{\mu}F(X_{\mu},Y)$.
  To prove the theorem, it suffices to show that the map
  \[
\begin{array}{ccl}
  \textstyle
  \FF' \circ~(1\times i_\lambda) \colon \map_0(X,X') \times F(X'_{\lambda},Y)
&\to& \map_0(X,X') \times \inlim_{\lambda}F(X'_{\lambda},Y)\\
 & \to& \inlim_{\mu}F(X_{\mu},Y)
\end{array} 
 \]
  which maps $(f,\alpha)$ to
  $i_{f^{\sharp}\lambda}(F(f_\lambda ,1_Y)(\alpha))$,
  is continuous for every covering $\lambda$ of $X$.

Let $\textstyle
 R \colon \map_0(X,X')
  \to \inlim_{\mu}\map_0(X_\mu,X'_\lambda) 
$ be the map which assigns to $f:X\to X'$
the image of $\map_0(X,X')$, $f_\lambda \in \map_0(X_{f^\sharp\lambda}, X'_\lambda)$ in
$\inlim_{\mu}\map_0(X_\mu,X'_\lambda)$, 
and let $Q$ be the map \[
\inlim_{\mu}\map_0(X_\mu,X'_\lambda) \times F(X'_{\lambda},Y)
  \to \inlim_\mu F(X_{\mu},Y),\] \[[f ,\alpha ] \mapsto i_{f^{\sharp}\lambda}f_\lambda \circ \alpha  =i_{f^{\sharp}\lambda}(F(f_\lambda ,1_Y)(\alpha) ).
\] 
Since we have $\FF'=Q\circ (R\times 1)$, we need only 
show the continuity of $Q$ and $R$.
Since $Q$ is a composite of elements of Im$R$ and
$F(X'_{\lambda},Y)$, $Q$ is continuous.

To see that $R$ is continuous, let
$W_{K^f,U}$ be a neighborhood of $f_{\lambda}$ in
  $\map_0(X_{f^{\sharp}\lambda},X'_\lambda)$, where
  $K^f$ is a compact subset of $X_{f^{\sharp}\lambda}$ and
  $U$ is an open subset of $X'_\lambda$.
Since $K^f$ is compact, there is a finite subcomplex $S^f$ of $X_{f^{\sharp}\lambda}$ such that $K^f\subset S^f$.
Let $\tau^f_i,~0\le i\le m,$ be simplexes of $S^f$. 
By taking a suitable subdivision of $X_{f^{\sharp}\lambda}$, 
we may assume that
there is a simplicial neighborhood $N_{\tau^f_i}$ of each
$\tau^f_i$, $1\le i \le m$, such that $K^f\subset S^f\subset \cup_i N_{\tau^f_i}\subset f_\lambda^{-1}(U)$.

Let $\{ x_k^i \}$ be the set of vertices of $\tau^f_i$ and let
$W$ be the intersection of all $W_{\{ x^i_k \},U_{(\tau^f_i)'}}$
where $U_{{\tau^f_i}'}$ is an open set of $X'_\lambda$ containing the set $\{ f(x^i_k) \}$.
Then $W$ is a neighborhood of $f$.
We need only show that $R(W)\subset i_{f\#\lambda}(W_{{K^f},U})$. 
Suppose that $g$ belongs to $W$.
Since $\{ x^i_{k}\}$ is contained in $g^{-1}(U_{({\tau^f_i})'})$ for any $i$, a simplex $\tau_i^{g}$
spanned by the vertices is contained in $X_{g^\sharp \lambda}$.
Let $S^g$ be the finite subcomplex of $X_{g^\sharp \lambda}$ consists of simplexes $\tau^g_i$.
By the construction, $S^f$ and $S^g$ are isomorphic.
Moreover there is a compact subset $K^g$ of $X_{g^\sharp \lambda}$ such that 
$K^{g}$ and $K^f$ are homeomorphic.
On the other hand, since $g(\{ x^i_{k}\})\subset U_{({\tau^f_i})'}$, there is a simplex of $X_\lambda'$ having $g_\lambda (\tau_i^{g})$ and $({\tau^f_i})'$ as its faces.
This means that $g_\lambda (\tau_i^{g})\subset f_\lambda (\cup_i N_{\tau^f_i})$.
Thus we have $g_\lambda (K^{g})=\cup_i g_\lambda (\tau_i^{g})\subset f_\lambda (\cup_i N_{\tau^f_i})$.

Let $f^\sharp \lambda \cap g^\sharp \lambda$ be an open covering 
\[
\{ f^{-1}(U)\cap g^{-1}(V) ~|~U,V\in \lambda  \}
\] of $X$.
We regard $X_{f^\sharp \lambda}$ and $X_{g^\sharp \lambda}$ as a subcomplex of 
$X_{f^\sharp \lambda \cap g^\sharp \lambda}$.
Since $g_\lambda |X_{f^\sharp \lambda \cap g^\sharp \lambda}$ is contiguous to $f_\lambda |X_{f^\sharp \lambda \cap g^\sharp \lambda}$,
we have a homotopy equivalence 
$g_\lambda |X_{f^\sharp \lambda \cap g^\sharp \lambda} \simeq f_\lambda |X_{f^\sharp \lambda \cap g^\sharp \lambda}$. 
By the homotopy extension property of $g_\lambda|X_{f^\sharp \lambda \cap g^\sharp \lambda}:X_{f^\sharp \lambda \cap g^\sharp \lambda}\to X_\lambda'$ and $f_\lambda :X_{f^\sharp \lambda} \to X_\lambda'$,
$g_\lambda|X_{f^\sharp \lambda \cap g^\sharp \lambda}$ extends to map $G:X_{f^\sharp \lambda}\to X'_\lambda$.

We have the relation $G\sim
\pi^{f^\sharp \lambda \cap g^\sharp \lambda}_{f^\sharp \lambda}G=g_\lambda|X_{f^\sharp \lambda \cap g^\sharp \lambda}=\pi^{f^\sharp \lambda \cap g^\sharp \lambda}_{g^\sharp \lambda}g_\lambda \sim g_\lambda$, where
$\sim$ is the relation of the direct limit.
Moreover by $G(K^f)\subset f_\lambda (\cup_i N_{\tau^f_i}) \subset U$,
we have $[g_\lambda]=[G] \in i_{f\# \lambda}(W_{K^f,U})$.
Hence $R$ is continuous, and so is $\FF'$.
 
\section{Proofs of Theorems \ref{Sp} and \ref{Cor1}}
To prove Theorems \ref{Sp} and \ref{Cor1}, we need several lemmas.
\begin{Lemma}\label{open}
There exists a sequence $\lambda^n_1<\lambda^n_2 <\cdots 
<\lambda^n_m<\cdots$ of open coverings of $S^n$ such that :
\begin{enumerate}
\item For each open covering $\mu$ of $S^n$, 
there is an $m\in \N$ such that $\lambda^n_m$ is a refinement of $\mu$:
\item For any $m$, $S^n_{\lambda_m}$ is homotopy equivalent to $S^n$.
\end{enumerate}
\end{Lemma}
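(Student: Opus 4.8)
The plan is to realise each $\lambda^n_m$ as the cover of $S^n$ by the open stars of the vertices of the $m$-th barycentric subdivision of a fixed triangulation, and then to read off both properties from two classical facts: the nerve of the open-star cover of a simplicial complex $K$ is simplicially isomorphic to $K$ itself, and iterated barycentric subdivision drives the mesh to $0$.

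Concretely, I would fix a finite triangulation $K_0$ of $S^n$ (say $K_0=\partial\Delta^{n+1}$), let $K_m$ denote its $m$-th barycentric subdivision — a finite complex with $|K_m|\cong S^n$ — and set $\lambda^n_m$ to be the open cover of $S^n$ by the open stars $\mathrm{st}(v,K_m)$, $v$ a vertex of $K_m$. To get $\lambda^n_m<\lambda^n_{m+1}$ I would invoke the standard inclusion $\mathrm{st}(\hat\sigma,K_{m+1})\subset\mathrm{st}(v,K_m)$, valid whenever $v$ is a vertex of a simplex $\sigma$ of $K_m$; since every vertex of $K_{m+1}$ is the barycentre of some simplex of $K_m$, this presents each $\mathrm{st}(\hat\sigma,K_{m+1})$ as a subset of a member of $\lambda^n_m$, i.e.\ $\lambda^n_{m+1}$ refines $\lambda^n_m$.

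For property (1) I would equip $S^n$ with the round metric. Since $K_m$ is finite, the diameter of $\mathrm{st}(v,K_m)$ is at most twice the mesh of $K_m$, and the latter tends to $0$ by the usual contraction estimate for barycentric subdivision of a fixed finite complex; thus $\mathrm{mesh}(\lambda^n_m)\to0$. Given an arbitrary open cover $\mu$ of $S^n$, compactness furnishes a Lebesgue number $\delta>0$ for $\mu$, and then any $m$ with $\mathrm{mesh}(\lambda^n_m)<\delta$ makes every member of $\lambda^n_m$ lie inside some member of $\mu$. For property (2), I would use that a finite intersection $\bigcap_j\mathrm{st}(v_j,K_m)$ is nonempty exactly when the $v_j$ span a simplex $\tau$ of $K_m$, in which case it equals $\mathrm{st}(\tau,K_m)$, an open cone and hence contractible. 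Therefore the $\CF$ech nerve $(S^n)^{\CF}_{\lambda^n_m}$ is simplicially isomorphic to $K_m$, so that $|(S^n)^{\CF}_{\lambda^n_m}|\cong|K_m|\cong S^n$ (equivalently, the cover is good and one may quote the nerve lemma). Finally, Proposition \ref{D} gives that the Vietoris nerve $S^n_{\lambda^n_m}$ has the same homotopy type as $(S^n)^{\CF}_{\lambda^n_m}$, whence $S^n_{\lambda^n_m}\simeq S^n$.

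The only step requiring real care is the quantitative claim $\mathrm{mesh}(\lambda^n_m)\to0$ in property (1): this must be justified for a triangulated sphere rather than a single linear simplex. It is routine — realise $|K_0|$ linearly in a Euclidean space, use that barycentric subdivision of an $n$-dimensional complex multiplies Euclidean diameters by at most $n/(n+1)$, and transport the bound back to $S^n$ along the radial (bi-Lipschitz) homeomorphism — but it is the one genuinely metric ingredient; the remaining steps are a purely combinatorial identification of the nerve of a star cover with the complex, together with an appeal to Proposition \ref{D}.
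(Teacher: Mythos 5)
Your proof is correct, but it takes a genuinely different route from the paper. The paper argues by induction on $n$: for $S^1$ it writes down explicit arc covers $\lambda^1_m$ whose \v{C}ech nerves are visibly homeomorphic to $S^1$, and for the inductive step it forms the product cover $\lambda^{n-1}_m\times\lambda^1_m$ on $S^{n-1}\times S^1$ and pushes it forward along the smash quotient $S^{n-1}\times S^1\to S^n$, asserting the identification $S^n_{\lambda^n_m}\approx (S^{n-1}_{\lambda^{n-1}_m}\times S^1_{\lambda^1_m})/(S^{n-1}_{\lambda^{n-1}_m}\vee S^1_{\lambda^1_m})$ --- a compatibility of Vietoris nerves with products and quotients that the paper does not justify in detail. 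Your construction via open-star covers of iterated barycentric subdivisions of $\partial\Delta^{n+1}$ is uniform in $n$, needs no induction, and rests only on classical simplicial facts: the nerve of the open-star cover is isomorphic to the complex (equivalently the cover is good, so the nerve lemma applies), the mesh of barycentric subdivision tends to $0$ so the Lebesgue number argument gives cofinality, and Proposition~\ref{D} transfers the conclusion from the \v{C}ech nerve to the Vietoris nerve exactly as the paper does. You also correctly isolate the one quantitative point (the mesh estimate transported along the bi-Lipschitz radial map), and your verification that $\lambda^n_{m+1}$ refines $\lambda^n_m$ via $\mathrm{st}(\hat\sigma,K_{m+1})\subset\mathrm{st}(v,K_m)$ is sound. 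What the paper's approach buys is an elementary, fully explicit cover in the base case and a construction that mirrors the smash-product structure used elsewhere in the paper; what yours buys is a cleaner and arguably more rigorous argument that sidesteps the unproven product/quotient step.
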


\begin{proof}
We prove by induction on $n$.
For $n=1$, we define an open covering $\lambda^1_m$
of $S^1$ as follows.
For any $i$ with $0\leq i <4m$, we put 
\[
U(i,m)=\{ (\cos \theta ,~\sin \theta )\mid \frac{i-1}{4m}\times 2\pi +
\frac{1}{16m}\times 2\pi <\theta < \frac{i+1}{4m}\times 2\pi +\frac{1}{16m}
\times 2\pi \}.
\] 
Let $\lambda^1_m=\{U(i,m)|~0\leq i <4m \}$. 
Then the set $\lambda^1_m$ is an open covering of $S^1$ and 
is a refinement of $\lambda^1_{m-1}$.
Clearly $(S^1)^{\CF }_{\lambda^1_m}$ is homeomorphic to $S^1$, hence $S^1_{\lambda^1_m} $ is homotopy equivalent to $S^1$.
Moreover for any open covering $\mu$ of $S^1$, 
there exists an $m$ such that $\lambda^{1}_{m}$ is a refinement of $\mu$. 
Hence the lemma is true for $n= 1$.
Assume now that the lemma is true for $1\le k\le n-1$. Let
${\lambda'}^n_m$ be the open covering $\lambda^{n-1}_m\times 
\lambda^{1}_m  $
of $S^{n-1}\times S^1$ and
let $\lambda_m^{n}$ be the open covering of 
$S^n$ induced by the natural map 
$p:S^{n-1}\times S^1 \to S^{n-1}
\times S^1/S^{n-1}\vee S^1$. 
Since
$S^{n-1}_{\lambda^{n-1}_m}$ is a homotopy equivalence of $S^{n-1}$,
we have
\[
S^n_{\lambda^n_{m} }\approx (S^{n-1}\times S^1/S^{n-1}\vee S^1)_{\lambda^n_m} 
\approx (S^{n-1}_{\lambda_m^{n-1}}
\times S^1_{\lambda_m})/(S^{n-1}_{\lambda^{n-1}_{m}}\vee S^1_{\lambda_m})
\approx S^n.
\]
Thus the sequence $\lambda_1^n <\lambda_2^n <\cdots <
\lambda_m^n <\cdots $ satisfies the required conditions.
\end{proof}

\begin{Lemma}
$h_n(X, \FF) \cong \pi_n\holim_\mu T(X_\mu^{\CF}) $ for $n\geq 0$.
\end{Lemma}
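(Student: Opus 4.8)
The plan is to start from the description of the homology theory attached to a bilinear enriched functor: by \cite[Th 7.4]{numerical}, for $n\ge 0$ one has $h_n(X,\FF)=\pi_0\FF(S^n,X)$, so it suffices to compute $\pi_0$ of $\FF(S^n,X)$. Writing $Z=\holim_\mu T(X^{\CF}_\mu)$ for brevity, the definition of $\FF$ gives $\FF(S^n,X)=\inlim_\lambda\map_0(S^n_\lambda,Z)$, the colimit running over the open coverings of $S^n$ directed by refinement, the transition maps being induced by the canonical maps $\pi^{\lambda}_{\mu}\colon S^n_\lambda\to S^n_\mu$.

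First I would cut the indexing set down to a sequence. By \lemref{open}(1) the coverings $\lambda^n_1<\lambda^n_2<\cdots$ are cofinal among all open coverings of $S^n$, so $\FF(S^n,X)=\inlim_m\map_0(S^n_{\lambda^n_m},Z)$. Since homotopy groups commute with filtered colimits of spaces — any simplex, and any homotopy of simplices, into such a colimit factors through a finite stage by compactness — and since by property (5) of $\NGc_0$ one has $\pi_0\map_0(A,Z)\cong[A,Z]_{*}$ for every pointed CW complex $A$, this yields
\[
h_n(X,\FF)=\pi_0\FF(S^n,X)=\inlim_m[S^n_{\lambda^n_m},Z]_{*}.
\]

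It then remains to evaluate this colimit. By \lemref{open}(2) each $S^n_{\lambda^n_m}$ is homotopy equivalent to $S^n$; I would moreover argue that the transition maps $\pi^{\lambda^n_{m+1}}_{\lambda^n_m}\colon S^n_{\lambda^n_{m+1}}\to S^n_{\lambda^n_m}$ are themselves homotopy equivalences. Granting this, every map $[S^n_{\lambda^n_m},Z]_{*}\to[S^n_{\lambda^n_{m+1}},Z]_{*}$ is a bijection, so the colimit equals any single term $[S^n,Z]_{*}=\pi_n(Z)$, and we conclude
\[
h_n(X,\FF)=\pi_n(Z)=\pi_n\holim_\mu T(X^{\CF}_\mu),
\]
naturally in $X$.

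The main obstacle is exactly the claim that the refinement maps $\pi^{\lambda^n_{m+1}}_{\lambda^n_m}$ are homotopy equivalences, and not merely maps between spaces each abstractly of the homotopy type of $S^n$; were they, say, degree-$2$ self-maps of a sphere, the colimit would be a localisation of $\pi_n(Z)$ rather than $\pi_n(Z)$ itself. I would settle this in one of two ways: either by noting that the Dowker/nerve-type equivalences of \propref{D} relating $S^n$ to the $S^n_{\lambda^n_m}$ can be chosen to commute up to homotopy with refinement — so that a map sitting in a triangle between two homotopy equivalences is again one — or, more concretely, by inspecting the explicit cell models in the proof of \lemref{open}: for $n=1$ the map $\pi^{\lambda^1_{m+1}}_{\lambda^1_m}$ is visibly a degree-one map of circles, and the smash-product induction of that proof propagates degree one to all $n$.
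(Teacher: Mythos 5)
Your proof follows essentially the same route as the paper's: identify $h_n(X,\FF)$ with $\pi_0\FF(S^n,X)$, commute $\pi_0$ past the filtered colimit and use the mapping-space adjunction to obtain $\inlim_\lambda[S^n_\lambda,\holim_\mu T(X^{\CF}_\mu)]$, and then invoke \lemref{open} to identify this colimit with $[S^n,\holim_\mu T(X^{\CF}_\mu)]=\pi_n\holim_\mu T(X^{\CF}_\mu)$. The one point where you go beyond the paper --- insisting that the refinement maps $\pi^{\lambda^n_{m+1}}_{\lambda^n_m}$ be homotopy equivalences compatible with the identifications with $S^n$, rather than merely maps between spaces each abstractly of the homotopy type of $S^n$ --- addresses a step the paper passes over in its one-line appeal to \lemref{open}, and your proposed remedies (degree one on the explicit circle models, propagated by the smash-product induction) are an appropriate way to close it.
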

\begin{proof}
By Lemma \ref{open}, we have an isomorphism
\[\inlim_\lambda [S^n_{\lambda},\holim_\mu T(X_{\mu}^{\CF})]\\
\cong [S^n,\holim_\mu T(X_{\mu}^{\CF})].\]
Thus we have
\[
\displaystyle \begin{array}{ccl}
h_n(X,\FF)&=&\pi_0\FF (S^n,X)\\
&=&\pi_0\cholim_\lambda \map_0( S^n_{\lambda},\holim_\mu T(X_{\mu}^{\CF}))\\
&\cong& \inlim_\lambda [S^0,\map_{0}(S^n_{\lambda},\holim_\mu T(X_{\mu}^{\CF})]\\
&\cong& \inlim_\lambda [S^n_{\lambda},\holim_\mu T(X_{\mu}^{\CF})]\\
&\cong& [S^n,\holim_\mu T(X_{\mu}^{\CF})]\\
&\cong& \pi_n\holim_\mu T(X_{\mu}^{\CF}).
\end{array}
\]
\end{proof}

Now we are ready to prove Theorem 2.
Let $X$ be a compact metric space and let $\Ss=\{ T(S^k)~|~k\geq 0\}$.
Since $X$ is a compact metric space, 
there is a sequence 
$\{\mu_i \}_{i\geq 0}$ of finite open covers of $X$ with $\mu_0=X$
and $\mu_i$ refining $\mu_{i-1}$ such that $X=\pnlim_iX^{\CF}_{\mu_i}$ holds.
Let us denote $X_{\mu_i}^{\CF}$ by $X_i^{\CF}$ 
and $X_{\mu_i}$ by $X_i$
if there is no possibility of
 confusion.
According to [F], there is a short exact sequence
\[
\xymatrix{
0 \ar[r] &\pnlim^1_{i} {H}_{n+1}(X_i^{\CF},\Ss)  \ar[r] &
 H^{st}_n(X,\Ss) \ar[r] 
& \pnlim_i{H}_{n}(X_i^{\CF},\Ss) \ar[r] & 0 \\ 
}
\]
where $H_n(X,\Ss)$ is the homology group of $X$ with coefficients in the spectrum $\Ss$.
(This is a special case of the Milnor exact sequence \cite{milnor}.)
On the other hand, by [BK], we have the following.
\begin{Lemma}\label{bous}$ ($\cite{bous}$) $
There is a natural short exact sequence
\[
\xymatrix{
0 \ar[r] &\pnlim^1_{i}~ \pi_{n+1}T(X_i^{\CF})  \ar[r] & \pi_n\holim_i T(X_i) \ar[r] 
&\pnlim_{i} ~\pi_{n}T(X_i^{\CF}) \ar[r] & 0. \\ 
}
\]
\end{Lemma}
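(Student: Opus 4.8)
The plan is to reduce the statement to the standard Bousfield--Kan $\pnlim^1$-sequence for the homotopy inverse limit of a tower, so that essentially nothing has to be proved beyond identifying the correct tower. Write $Z_i=T(X^{\CF}_{\mu_i})$; then $\{Z_i\}_{i\ge 0}$ is a tower of pointed spaces in $\NGc_0$, with bonding maps $p_i\colon Z_{i+1}\to Z_i$ obtained by applying $T$ to the refinement maps $X^{\CF}_{\mu_{i+1}}\to X^{\CF}_{\mu_i}$. By Proposition~\ref{D} and the fact that the enriched functor $T$ preserves homotopy equivalences (a homotopy between two maps is a path in a mapping space, which $T$ carries to a path in a mapping space), $T(X^{\CF}_{\mu_i})$ and $T(X_{\mu_i})$ have the same homotopy type; hence it is immaterial whether the middle term is built from the \v{C}ech or the Vietoris nerves, and I shall work with $Z_i=T(X^{\CF}_{\mu_i})$ throughout.

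First I would record that the homotopy inverse limit may be computed in $\NGc_0$ or in $\Top_0$ to the same effect: since $\NGc_0$ is complete the limits involved exist there, and, because limits in $\NGc_0$ are the coreflections of the corresponding limits in $\Top_0$ and the coreflection arrow $\nu X\to X$ is a weak equivalence (Section~2), $\holim_i Z_i$ agrees up to weak equivalence with the topological homotopy inverse limit. Then I would invoke the Bousfield--Kan description \cite{bous} of the homotopy limit of a tower: $\holim_i Z_i$ is the homotopy fibre of a natural self-map of $\prod_{i\ge 0}Z_i$ that induces, on $k$-th homotopy groups, the endomorphism $\mathrm{id}-\mathrm{sh}$ of $\prod_{i}\pi_k Z_i$, where $\mathrm{sh}(a_i)_i=\bigl(p_{i\ast}(a_{i+1})\bigr)_i$.

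Feeding this fibration into the long exact homotopy sequence and using $\pi_k\bigl(\prod_i Z_i\bigr)\cong\prod_i\pi_k Z_i$, the portion around degree $n$ reads
\[
\prod_i\pi_{n+1}Z_i\xrightarrow{\ \mathrm{id}-\mathrm{sh}\ }\prod_i\pi_{n+1}Z_i\longrightarrow\pi_n\holim_i Z_i\longrightarrow\prod_i\pi_n Z_i\xrightarrow{\ \mathrm{id}-\mathrm{sh}\ }\prod_i\pi_n Z_i .
\]
By definition the cokernel of the first arrow is $\pnlim^1_i\pi_{n+1}Z_i$ and the kernel of the last is $\pnlim_i\pi_n Z_i$, so the sequence collapses to the asserted short exact sequence. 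Naturality follows at once, since the product, the self-map $\mathrm{id}-\mathrm{sh}$, and the homotopy fibre are all functorial in the tower $\{Z_i\}$.

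The only point that is more than bookkeeping is the low-degree range. For $n\ge 1$ all the terms are groups, abelian once $n\ge 2$, and the argument above is literal. For $n=0$ the groups $\pi_1 Z_i$ may be nonabelian, $\pnlim^1$ must be interpreted as a pointed set, exactness at $\pi_0\holim_i Z_i$ is exactness of pointed sets, and the injectivity of $\pnlim^1_i\pi_1 Z_i\hookrightarrow\pi_0\holim_i Z_i$ requires the usual argument with the twisted action of $\prod_i\pi_1 Z_i$. All of this is carried out in \cite{bous}; since our indexing category is the tower $\N$ and each $Z_i=T(X^{\CF}_{\mu_i})$ is a pointed object of $\NGc_0$, the hypotheses there are met and the sequence applies verbatim. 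I expect this low-dimensional bookkeeping to be the only genuine obstacle.
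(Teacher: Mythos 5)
Your proposal is correct, but note that the paper does not actually prove this lemma: it is stated with the citation $(\cite{bous})$ and used as a black box, so there is nothing internal to compare against except the choice to delegate to Bousfield--Kan. What you have written is a faithful reconstruction of the standard argument being cited: present $\holim_i Z_i$ of the tower $Z_i=T(X_i^{\CF})$ as a homotopy fibre over $\prod_i Z_i$, run the long exact homotopy sequence, and identify the cokernel and kernel of $\mathrm{id}-\mathrm{sh}$ with $\pnlim^1_i\pi_{n+1}Z_i$ and $\pnlim_i\pi_n Z_i$. Two remarks. First, for a tower of arbitrary pointed spaces there is no literal self-map of $\prod_i Z_i$ inducing $\mathrm{id}-\mathrm{sh}$; the correct general formulation is the homotopy equalizer (or homotopy pullback of $\Delta$ against $(\mathrm{id},\mathrm{sh})$), whose Mayer--Vietoris sequence yields the same kernel and cokernel. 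In the present application this is harmless, since $T$ is linear and hence each $Z_i\simeq\Omega T(\Sigma X_i^{\CF})$ is group-like, so your fibre-of-a-self-map description is literally available and, moreover, all the $\pi_1$'s are abelian, which disposes of your $n=0$ caveat. Second, you rightly flag the mismatch in the paper's statement between the Vietoris nerve $X_i$ in the middle term and the \v{C}ech nerves $X_i^{\CF}$ in the outer terms; invoking Proposition~\ref{D} to pass between them requires the Dowker equivalences to be compatible with the refinement maps of the tower (a levelwise weak equivalence of towers), which holds but deserves the one sentence you implicitly give it. Neither point is a gap; your writeup supplies exactly the content the paper outsources to \cite{bous}.
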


By Proposition \ref{numerical},
we have a diagram\\
\begin{equation}
\small{\xymatrix{
0 \ar[r] &\pnlim^1_{i}~ H_{n+1}(X_i^{\CF},\Ss) \ar[d]^\cong \ar[r] &
 H^{st}_n(X,\Ss) \ar[r] 
&\pnlim_{i} ~H_{n}(X_i^{\CF},\Ss) \ar[r] \ar[d]^\cong & 0\\ 
0 \ar[r] &\pnlim^1_{i}~ \pi_{n+1}(T(X_i^{\CF})) 
 \ar[r] & \pi_n(\holim_{i}T(X_i^{\CF})) \ar[r]  
&\pnlim_{i} ~\pi_{n}(T(X_i^{\CF})) \ar[r]  & 0. \\ 
}}
\end{equation}
Hence it suffices to construct a natural homomorphism
\[
H^{st}_n(X,\Ss) 
\to \pi_{n}(\holim_iT(X_i^{\CF}))
\]
making the diagram (4.1) commutative.

Since $T$ is continuous, the identity map $X\wedge S^k\to X\wedge S^k$
induces a continuous map $i':X\wedge T(S^k) \to T(X\wedge S^k)$.
Hence we have the composite homomorphism
\[
\displaystyle \begin{array}{ccl}
H^{st}_n(X,\Ss) &=& \pi_{n}\underleftarrow{{\rm holim}}_i(X_{i}^{\CF}\wedge \Ss)\\
&\cong& \inlim_k \pi_{n+k}(\holim_i(X_i^{\CF}\wedge T(S^k)) \\
&\xrightarrow{I}&\inlim_k \pi_{n+k}(\holim_iT(X_i^{\CF}\wedge S^k))\\
&\cong& \pi_{n}(\holim_iT(X_i^{\CF}))\\
\end{array}
\]
in which $I=\inlim_k{i'}^k_*$ is induced by the homomorphisms
\[{i'}^k_*:\pi_{n+k}(\holim_i(X_i^{\CF}\wedge T(S^k))\to \pi_{n+k}(\holim_iT(X_i^{\CF}\wedge S^k)).\]
Clearly the homomorphism 
$H^{st}_n(X,\Ss)\to \pi_{n}(\holim_iT(X_i^{\CF}))
$
makes the diagram (4.1) commutative.
Thus $h_n(X,\FF)$ is isomorphic to the Steenrod homology group coefficients
in the spectrum $\Ss$.

Finally, to prove Theorem 3 it suffices to show that $h^n(X,\CF)$ is isomorphic to the $\CF$ech cohomology group 
of $X$.

By Lemma \ref{open},
we have a homotopy commutative diagram
\[
\xymatrix{
\cdots \ar[r]^= & AG(S^n) \ar[r]^= \ar[d]^\simeq & AG(S^n) 
\ar[r]^= \ar[d]^\simeq&\cdots \\ 
\cdots \ar[r] & AG(S^n_{\lambda^n_{m-1}}) \ar[r]^\simeq & AG(S^n_{\lambda^n_{m}})
 \ar[r] &\cdots .\\ 
}
\]
Hence we have $AG(S^n)\simeq {\underleftarrow{\textrm{holim}}}_i AG(S^n_{\lambda_i^n})$.

Thus we have
\[
\displaystyle \begin{array}{ccl}
h^n(X,\check{C})&=&\pi_0\check{C}(X,S^n)\\
&=&\pi_0\cholim_\lambda \map_0(X_{\lambda},\holim_\mu AG((S^n)^{\CF}_{\mu}))\\
&\cong& [S^0,\cholim_\lambda \map_0(X_{\lambda},AG(S^n)]\\
&\cong& \inlim_\lambda [S^0,\map_{0}(X_{\lambda},AG(S^n)]\\
&\cong& \inlim_\lambda [S^0\wedge X_{\lambda},AG(S^n)]\\
&\cong& \inlim_{\lambda} [X_{\lambda},AG(S^n)].
\end{array}
\]
Hence $h^n(X,\CF )$ is isomorphic to the \v{C}ech cohomology group.

\providecommand{\bysame}{\leavevmode\hbox to3em{\hrulefill}\thinspace}
\providecommand{\MR}{\relax\ifhmode\unskip\space\fi MR }
\providecommand{\MRhref}[2]{%
  \href{http://www.ams.org/mathscinet-getitem?mr=#1}{#2}
}
\providecommand{\href}[2]{#2}

\end{document}